\newcommand{\R}{\mathbb{R}}
\newcommand{\N}{\mathbb{N}}
\newcommand{\W}{{\bf{W}}}
\newcommand{\I}{{\bf{I}}}
\newcommand{\be}{\begin{equation}}
\newcommand{\ee}{\end{equation}}
\newcommand{\bee}{\begin{equation*}}
\newcommand{\eee}{\end{equation*}}
\newcommand{\bea}{\begin{eqnarray}}
\newcommand{\eea}{\end{eqnarray}}
\newcommand{\bess}{\begin{eqnarray*}}
\newcommand{\eess}{\end{eqnarray*}}
\numberwithin{equation}{section}
\theoremstyle{plain}
\newtheorem{Thm}{Theorem}[section]
\newtheorem{Cor}[Thm]{Corollary}
\theoremstyle{definition}
\newtheorem{Def}[Thm]{Definition}
\newtheorem{Rem}[Thm]{Remark}
\theoremstyle{remark}
\begin{document}
%%%%%%%%%%%%%%%%%%%%%%%%%%%%%%%%%%%% 
\title[Fractional Sublinear Sobolev inequality]
{Fractional sublinear Sobolev inequality for $\mathcal{L}-$superharmonic functions}
  
\author{Aye Chan May}
\address[Aye Chan May]{Sirindhorn International Institute of Technology, Thammasat University, Pathum Thani 12120, Thailand}
\email{\href{d6622300199@g.siit.tu.ac.th}{d6622300199@g.siit.tu.ac.th (A.C. May)}}

\author{Adisak Seesanea}
\address[Corresponding Author: Adisak Seesanea]{Sirindhorn International Institute of Technology, Thammasat University, Pathum Thani 12120, Thailand}
\email[A. Seesanea]{\href{adisak.see@siit.tu.ac.th}{adisak.see@siit.tu.ac.th (A. Seesanea)}}
%%%%%%%%%%%%%%%%%%%%%%%%%%%%%%%%%%%%

%%%%%%%%%%%%%%%%%%%%%%%%%%%%%%%%%%%%
\subjclass[2020]{Primary 46E35, 31B35; Secondary  31B05, 35R11.} 
\keywords{Sobolev inequality, Lorentz space, Wolff potential, superharmonic function}
%%%%%%%%%%%%%%%%%%%%%%%%%%%%%%%%%%%%
\maketitle
%%%%%%%%%%%%%%%%%%%%%%%%%%%%%%%%%%%%
%%%%%%%%%%%%%%%%%%%%%%%%%%%%%%%%%%%%
\begin{abstract}
We establish a Sobolev-type inequality in Lorentz spaces for $\mathcal{L}$-superharmonic functions
\[
\|u\|_{L^{\frac{nq}{n-\alpha q},t}(\mathbb{R}^n)} \leq c \left\| \frac{u(x) - u(y)}{|x-y|^{\frac{n}{q}+\alpha}} \right\|_{L^{q,t}(\mathbb{R}^n \times \mathbb{R}^n)}
\]
in the sublinear case $p-1 < q < 1$ and $p-1\leq t\leq \infty$. The nonlocal nonlinear elliptic operator 
$\mathcal{L}$ is modeled from the fractional $p$-Laplacian $(- \Delta_{p})^{\alpha} $ with $0 < \alpha < 1$
and $1<p<2$. Related Gagliardo-Nirenberg interpolation for $\mathcal{L}$-superharmonic functions is also derived.
\end{abstract}
%%%%%%%%%%%%%%%%%%%%%%%%%%%%%%%%%%%%
\setcounter{tocdepth}{1}
%\tableofcontents
%%%%%%%%%%%%%%%%%%%%%%%%%%%%%%%%%%%%
\section{Introduction}\label{sect:intro} 

The classical Sobolev inequality by Sobolev \cite{So} states that for any $1<q<n$ there exists a positive constant $C(n,q)$ such that 
\be\label{standardsobolev}
\big(\int_{\mathbb{R}^{n}} u^{\frac{nq}{n-q}} dx \big)^{\frac{n-q}{nq}}\leq C(n,q)\big(\int_{\mathbb{R}^{n}} |\nabla u|^{q} dx\big)^{\frac{1}{q}}
\ee
for every function $u\in\mathcal{C}^{\infty}_{0}(\R^{n})$.

A natural extension of \eqref{standardsobolev} is in the framework of fractional Sobolev space $W^{\alpha,q}(\R^{n})$. In this setting, the following fractional Sobolev inequality holds, for any $1<q<\frac{n}{\alpha}$ and $0<\alpha<1$,
\be\label{original}
\big( \int_{\R^{n}} u^{\frac{nq}{n-\alpha q}} dx\big)^{\frac{n-\alpha q}{nq}}\leq C(n,\alpha,q)\Big( \int_{\R^{n}}\int_{\R^{n}}\frac{|u(x)-u(y)|^{q}}{|x-y|^{n+\alpha q}} dydx\Big)^{\frac{1}{q}}.
\ee
for all $u\in W^{\alpha,q}(\R^{n})$.  There are a large number of references in the literature studying the above inequality with different approaches, see \cite{EGE}, \cite{Po}, and \cite{Sa}.

 The general idea described here had been motivated by \cite{Ph1}. In particular, the author showed that \eqref{standardsobolev} remains to hold for $p-1<q<1$ (sublinear case) when restricted to a class of positive $p$-superharmonic ($1<p<2$) functions $u$ in $\R^{n}$ such that $\inf\limits_{\R^{n}} u=0.$ The techniques in \cite{Ph1} are based on the global pointwise estimates in terms of Wolff potential, as established by Kilpel\"{a}inen and Mal\'{y} \cite{KM1}. Furthermore, the authors in \cite{PV23} proved that the uniqueness of $\mathcal{A}$-superharmonic solution  to the equation 
 \[
 -\text{div} \mathcal{A}(x,\nabla u)=\sigma\quad\text{in}\quad \R^{n},
 \]
under the conditions involving the weak integrability of the gradient of the solution, utilizing the sublinear Sobolev inequality (see \cite[Theorem 3.12]{PV23}.  In addition, Chua and Phuc extended the sublinear Sobolev inequality for any superharmonic functions on the generalized John domain, see \cite[Corollary 5.7]{CP}.

 Although the sublinear Sobolev inequality in \cite{Ph1} holds for a more general class of positive $p$-superharmonic functions, the author in \cite{Da} pointed out that the classical Sobolev inequality fails for all functions $u\in C_{0}^{\infty}(\mathbb{R}^{n})$ with $0<q<1$ by providing a counterexample.

%The author also provided that the sublinear inequality 
%\[
%\big( \int_{\R^{n}}|u|^{q_{1}}d\sigma \big)^{\frac{1}{q_{1}}}\leq C\big(\int_{\R^{n}} |\nabla u|^{q} dx\big)^{\frac{1}{q}}
%\]
%holds for any superharmonic function $u$ in $\R^{n}$ with $\inf\limits_{\R^{n}} u=0$ 
%if the nonnegative locally finite measure $\sigma$ satisfies
%\[
%\sigma(B(x,r))\leq C r^{\frac{(n-q)q_{1}}{q}}
%\]
%for all $x\in\R^{n}$ and $r>0$.
This paper aims to establish that the sublinear fractional Sobolev inequality of the type \eqref{original} under Lorentz norm holds when the Sobolev exponent $p-1<q<1$ for a class of $\mathcal{L}$-superharmonic functions $u$ on $\Omega$ with $u=0$ in $\R^{n}\setminus\Omega$. We demonstrate that a similar sublinear fractional Sobolev inequality remains valid when  Lorentz norm on both sides are replaced by $L^{p}$ norm, providing a fractional analogue of the result in the \cite{Ph1}.  More generally, we also consider fractional Sobolev inequality with Muckenhoupt weight in the sublinear case (see Theorem \ref{weighted}).

In this paper, we consider a nonlocal operator with nonlinear growth defined on an open and bounded subset $\Omega\subset\R^{n}$ with $n\in\N$. For $0<\alpha<1,p>1 $ and $\Lambda\geq 1$,
\[
\mathcal{L} u(x)=2 \lim\limits_{\varepsilon\to 0}\int_{\R^n\setminus B(x,\varepsilon)} |u(x)-u(y)|^{p-2}(u(x)-u(y))k(x,y) dy,\;\; x\in\R^{n}
\]
where the kernel $k:\R^{n}\times\R^{n}\to [0,\infty),$ which is a symmetric measurable function satisfying the following condition:
\[
\Lambda^{-1} |x-y|^{-n-\alpha p}\leq k(x,y)\leq \Lambda |x-y|^{-n-\alpha p}.
\]

In the case $k(x,y)=|x-y|^{-n-\alpha p},$ this operator is modeled on the fractional $p$-Laplacian, $(-\Delta_{p})^{\alpha}.$ Notably, in the linear nonlocal case when $p=2$ and $\Lambda=1,$ this reduces to the pure fractional Laplacian operator $(-\Delta)^{\alpha}$. We refer to \cite{Pa} for the introduction of this operator.

\begin{Thm}\label{mainthm}
Let $\Omega$ be an open bounded subset in $\R^{n}$ where $n\geq 1$. Let $0<\alpha<1, p-1\leq t\leq \infty, 1<p<2$ and $p-1<q<1.$ For any $\mathcal{L}$-superharmonic function $u$ on $\Omega$ with $u=0$ in $\R^{n}\setminus\Omega$, we have
\be \label{fracSoboineq}
\|u\|_{L^{\frac{nq}{n-\alpha q},t}(\R^{n})} \leq C [u]_{W^{\alpha,(q,t)}(\R^{n})}
\ee
where $C=C(n,\alpha,p,q,t,\Lambda)>0.$
\end{Thm}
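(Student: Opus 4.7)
The plan is to follow the strategy of Phuc \cite{Ph1} for the classical case, adapted to the fractional nonlocal setting, and using Lorentz-space machinery throughout. The crucial ingredient that makes the sublinear range $p-1<q<1$ work is the pointwise two-sided estimate in terms of a nonlinear Wolff potential together with a testing/energy identity for $\mathcal{L}$-superharmonic functions.

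\textbf{Step 1 (Wolff potential representation).} First I would invoke the fractional Kilpel\"ainen--Mal\'y type estimate (in the nonlocal setting, due to Kuusi--Mingione--Sire and analogues): associated with an $\mathcal{L}$-superharmonic function $u$ with $u=0$ in $\R^{n}\setminus\Omega$ there is a nonnegative Riesz measure $\mu$ on $\Omega$ such that
\bee
c_{1}\,\mathbf{W}^{\alpha,p}\mu(x)\leq u(x)\leq c_{2}\,\mathbf{W}^{\alpha,p}\mu(x),\qquad x\in\R^{n},
\eee
where $\mathbf{W}^{\alpha,p}\mu(x)=\int_{0}^{\infty}\bigl(\mu(B(x,r))/r^{n-\alpha p}\bigr)^{1/(p-1)}\,dr/r$. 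Since $\mu$ is supported in $\Omega$ and $\Omega$ is bounded, the tail contribution is harmless.

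\textbf{Step 2 (reduce both sides to quantities involving $\mu$).} For the left-hand side, monotonicity of the Lorentz quasi-norm and Step~1 give
\bee
\|u\|_{L^{nq/(n-\alpha q),\,t}(\R^{n})}\leq c_{2}\,\|\mathbf{W}^{\alpha,p}\mu\|_{L^{nq/(n-\alpha q),\,t}(\R^{n})}.
\eee
For the right-hand side, I would use $u$ as a test function against $\mathcal{L}u=\mu$ (using the symmetry and two-sided bound on $k$) to obtain the fractional energy identity
\bee
[u]_{W^{\alpha,p}(\R^{n})}^{p}\asymp \int_{\R^{n}}\int_{\R^{n}}|u(x)-u(y)|^{p}k(x,y)\,dx\,dy \asymp \int_{\Omega} u\,d\mu,
\eee
and then, via the pointwise lower bound $u\geq c_{1}\mathbf{W}^{\alpha,p}\mu$ and the standard Wolff-potential identity $\int \mathbf{W}^{\alpha,p}\mu\,d\mu\asymp \|\mathbf{I}_{\alpha}\mu\|_{L^{p/(p-1)}}^{p/(p-1)}$ (or its Lorentz analogue), I would express the energy as an integral quantity against $\mu$.

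\textbf{Step 3 (Hedberg--Wolff in Lorentz scales in the sublinear regime).} The core analytic step is a Hedberg--Wolff-type inequality in Lorentz spaces valid for exponents $p-1<q<1$, $p-1\leq t\leq \infty$: roughly,
\bee
\|\mathbf{W}^{\alpha,p}\mu\|_{L^{nq/(n-\alpha q),\,t}(\R^{n})}\leq C\,\Bigl(\int_{\Omega} \mathbf{W}^{\alpha,p}\mu\,d\mu\Bigr)^{\theta_{1}} \Bigl(\text{some Lorentz norm of }\mathbf{W}^{\alpha,p}\mu\Bigr)^{\theta_{2}},
\eee
obtained by a Hedberg-style truncation of the potential combined with the Lorentz-space maximal inequality and the sublinearity trick $(a+b)^{q}\leq a^{q}+b^{q}$. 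Once this is in place, one rewrites the output of Step~2 so that the energy $\int u\,d\mu$ matches the Gagliardo seminorm in $L^{q,t}$ (using that the $L^{p}$ scale of the energy is exchanged for an $L^{q}$ scale via the pointwise identification $u\sim \mathbf{W}^{\alpha,p}\mu$, which is precisely the mechanism by which the sublinear range becomes accessible).

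\textbf{Step 4 (close the estimate).} Putting Steps~1--3 together and simplifying exponents (which work out because $nq/(n-\alpha q)$ is the fractional Sobolev conjugate of $q$ at order $\alpha$), one obtains the claimed inequality with a constant depending only on $n,\alpha,p,q,t,\Lambda$. The hardest step is expected to be Step~3: establishing the Hedberg--Wolff inequality in the Lorentz scale uniformly down to $q$ arbitrarily close to $p-1$ and for all $t\in[p-1,\infty]$, since the classical $L^{q}$ proof relies on duality that is unavailable for $q<1$ and must be replaced by a direct truncation argument using the superharmonicity of $u$ to absorb the lower-order terms.
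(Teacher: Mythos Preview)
Your proposal diverges from the paper at the crucial juncture, and the divergence creates a real gap. In Step~2 you test the equation $\mathcal{L}u=\mu$ with $u$ itself, which produces the $p$-energy $[u]_{W^{\alpha,p}}^{p}\asymp\int u\,d\mu$. But the target of the inequality is $[u]_{W^{\alpha,(q,t)}}$ with $q<1<p$, and nothing in your Step~3 explains how the $p$-energy (or $\int \mathbf{W}^{\alpha,p}\mu\,d\mu$) controls, or is controlled by, a $(q,t)$-Gagliardo seminorm of $u$. The vaguely stated Hedberg--Wolff--Lorentz inequality with unspecified $\theta_{1},\theta_{2}$ does not close this loop; you would need a mechanism that \emph{lowers} the integrability exponent on the seminorm side from $p$ to $q$, and a truncation argument on potentials does not obviously deliver that.

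The paper avoids this obstacle by a different route. After the same Step~1, it uses $1<p<2$ to dominate $\mathbf{W}^{\alpha,p}\mu$ pointwise by $(\mathbf{I}_{\alpha p}\mu)^{1/(p-1)}$, so that $\|u\|_{L^{nq/(n-\alpha q),t}}\lesssim \|\mathbf{I}_{\alpha p}\mu\|_{L^{nq/((n-\alpha q)(p-1)),\,t/(p-1)}}^{1/(p-1)}$. The point is that the lifted exponent $q/(p-1)$ is $>1$ (precisely because $q>p-1$), so duality \emph{is} available --- contrary to your Step~3 diagnosis. Writing $\mathbf{I}_{\alpha p}=\mathbf{I}_{\alpha(p-1)}\mathbf{I}_{\alpha}$, passing through Triebel--Lizorkin--Lorentz embeddings, and then dualizing at exponent $q/(p-1)$, one is led to estimate $\langle \mathcal{L}u,\mathbf{I}_{\alpha}\phi\rangle$ for a dual test function $\phi$ (not $u$). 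H\"older's inequality in Lorentz spaces on the resulting double integral then splits off $[u]_{W^{\alpha,(q,t)}}^{p-1}$ times $[\mathbf{I}_{\alpha}\phi]_{W^{\alpha,((q/(p-1))',(t/(p-1))')}}$, the latter being uniformly bounded by the normalization of $\phi$. Thus the $(q,t)$-seminorm appears directly from H\"older at the bilinear-form level, not from any energy identity; this is the step your outline is missing.
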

The proof of Theorem \ref{mainthm} is given in Section \ref{sec3}.
\subsection*{Organization of the paper} This paper is organized into three sections. In section \ref{sec2}, we give some background information on $\mathcal{L}$ -superharmonic, a review of the theory of the Triebel-Lizorkin-Lorentz spaces, and some useful theorems in our work. The section \ref{sec3} is devoted to the proof of the main theorem. We discuss further results related to the fractional Sobolev inequality in the final section.
\section{Preliminaries}\label{sec2}
%%%%%%%%%%%%%%%%%%%%%%%%%%%%%%%%%%%%
\subsection*{Function spaces}
To state our results, let us introduce some definitions and notation. Let $\Omega$ be an bounded open set in $\R^{n}$ with $n\in\N$ and $\mathcal{M}^{+}(\Omega)$ denote the class of all nonnegative finite Borel measures on $\Omega$. We denote $\rho(\R^{n})$ and  $\rho'(\R^{n})$ as the Schwartz space of all complex-valued rapidly decreasing infinitely differentiable functions on $\R^{n}$ and the set of all tempered distributions, respectively. 

Let $0<p<\infty,$ $1\leq r\leq\infty$ and $\Omega$ be an open set in $\R^{n}$. Let $L^{p,r}$ denote the \textbf{Lorentz space} of a measurable function $u,$ which is a quasi-Banach space equipped with the quasi-norm
\[
\|u\|_{L^{p,r}(\Omega)}=
\begin{cases}
\Big( p\int_{0}^{\infty}[\beta \mu_{u}(\beta)^{\frac{1}{p}}]^{r}\frac{d\beta}{\beta}\Big)^{\frac{1}{r}}\quad\text{if}\quad p,r<\infty,\\
\sup\limits_{\beta>0}\beta \mu_{u}(\beta)^{\frac{1}{p}}\quad\quad\quad\quad\quad\;\text{if}\quad p<\infty, q=\infty,\\
\inf\{\beta>0:\mu_{u}(\beta)=0\} \;\;\;\text{if}\quad p=r=\infty,
\end{cases}
\]
where $\mu_{u}$ denotes the distribution function of $u:\mu_{u}(\beta)=|\{x\in\Omega:|u(x)|>\beta\}|.$

For $0<\alpha<1,$ $0<p<\infty,$ and $1\leq r\leq\infty$,  we define the \textbf{fractional Sobolev-Lorentz space} $W^{\alpha,(p,r)}$ as follows
\[
W^{\alpha,(p,r)}(\Omega)=\{u\in L^{p,r}(\Omega): \frac{|u(x)-u(y)|}{|x-y|^{\frac{n}{p}+\alpha}} \in L^{p,r}(\Omega\times\Omega)\}.
\]
 It is a quasi-Banach space and associated with this norm is called the \textbf{Gagliardo seminorm} 
 \[
 [u]_{W^{\alpha,(p,r)}(\Omega)}=\Big\| \frac{|u(x)-u(y)|}{|x-y|^{\frac{n}{p}+\alpha }} \Big\|_{L^{p,r}(\Omega\times\Omega)}.
 \]
 We refer to readers \cite{EE} and \cite{Le} for more basic properties of fractional Sobolev space.

Next, we recall the standard definition of Triebel-Lizorkin-Lorentz space. To define the space, we choose a sequence of test functions $\{\phi_{j}\}_{j=0}^{\infty}\subset \rho$ such that
 \begin{itemize}
 \item[(i)] there exist positive constants $A,B,C$ and
 \[
 \begin{cases}
\mathrm{supp}\; \phi_{0}\subset\{x| \; |x|\leq A\},\\
\mathrm{supp}\; \phi_{j}\subset\{x|  B 2^{j-1}\leq |x| \leq C 2^{j+1}\}\quad\text{if}\quad j=1,2,3,\dots,
 \end{cases}
 \]
 \item[(ii)] for every multi-index $\alpha$ there exists a positive cumber $c_{\alpha}$ and
 \[
 \sup_{x}\sup_{j=0,1,\dots} 2^{j|\alpha|}|D^{\alpha}\phi_{j}(x)|\leq c_{\alpha},
 \]
 \item[(iii)] \[
 \sum_{j=0}^{\infty}\phi_{j}(x)=1\quad\text{for every}\quad x\in\R^{n}.
 \]
 \end{itemize}
 
 \begin{Def}\label{Triebel}
 Let $s$ be real, $0<q,r\leq \infty$ and $0<p<\infty$. The \textbf{Triebel-Lizorkin-Lorentz  space} ${F}^{\alpha}_{q}[L^{p,r}]$ is the space of all tempered distributions $f$ such that
 \[
 \Big\| \Big(\sum_{j=0}^{\infty} 2^{j\alpha q} |\mathcal{F}^{-1}(\phi_{j}(\xi)\hat{f}(\xi))|^{q}\Big)^{\frac{1}{q}} \Big\|_{L^{p,r}(\R^{n})} <\infty.
 \]
 \end{Def}
 Clearly, in the case $p=r$, Definition \ref{Triebel} covers the usual Triebel-Lizorkin spaces ${F}^{\alpha}_{q}[L^{p,p}](\R^{n})=F^{\alpha}_{p}(\R^{n}).$
\begin{Rem}
The equivalent quasi-norms of the Triebel-Lizorkin norm are as follows:
\begin{itemize}
\item[(i)] $\|\cdot\|_{L^{p}(\R^{n})}=\|\cdot\|_{F^{0}_{p,2}(\R^{n})}$\;\;\quad\quad if \quad $1<p<\infty,$
\item[(ii)] $\|\cdot\|_{W^{\alpha,p}(\R^{n})}=\|\cdot\|_{F^{\alpha}_{p,p}(\R^{n})}$\;\;\quad if \quad $1\leq p<\infty, 0<\alpha\neq$integer,
\item[(iii)] $\|\cdot\|_{L^{p,r}(\R^{n})}=\|\cdot\|_{F^{0}_{2}[L^{p,r}](\R^{n})}$ \;if\quad\;$1<p<\infty$ and $0<r\leq \infty.$
\end{itemize}
These equivalences in the Triebel-Lizorkin and Triebel-Lizorkin Lorentz spaces have been studied by Triebel \cite{Tr}, Runst-Sickel \cite{RS}, as well as Yang-Cheng-Bang \cite{YCB}.
\end{Rem}
\begin{comment}
\begin{Thm}   [{\text See \cite{SiTr} }]  \label{embedding}
Let $\alpha_{0}\in \mathbb{R}$, $0<q,r\leq \infty$. If $0<p_{0}<p_{2}<\infty$,  satisfies
\[
\alpha_{0}-\frac{n}{p_{0}}=\alpha_{1}-\frac{n}{p_{2}},
\]
then
\[
{F}^{\alpha_{0}}_{p_{0},q} \hookrightarrow {F}^{\alpha_{2}}_{p_{2},r}.
\]
\end{Thm}
\end{comment}
\begin{Thm}[{\text See \cite[Theorem 1.6]{ST} }]  \label{Lorentzembedding}
Let $\alpha_{1},\alpha_{2}\in\R, 0<p_{1},p_{2}<\infty, 0<q_{1},q_{2},r_{1},r_{2}\leq\infty.$ The embedding
\[
\|\cdot\|_{F^{\alpha_{2}}_{q_{2}}[L^{{p_{2},r_{2}}}](\R^{n})}\leq \|\cdot\|_{F^{\alpha_{1}}_{q_{1}}[L^{{p_{1},r_{1}}}](\R^{n})}
\]
if and only if the condition is satisfied
\[
\quad \alpha_{1}-\frac{n}{p_{1}}=\alpha_{2}-\frac{n}{p_{2}}, r_{1}\leq r_{2}.
\]
\end{Thm}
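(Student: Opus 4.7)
My plan is to treat the two directions of the ``if and only if'' separately. For sufficiency, the strategy is to combine the classical Jawerth--Franke embedding for Triebel--Lizorkin spaces with a real interpolation argument that upgrades the underlying $L^p$ base to the Lorentz base $L^{p,r}$. For necessity, I would use scaling arguments and carefully constructed test functions (dilated atoms and lacunary sums of well-separated atoms) to read off the two conditions $\alpha_1 - n/p_1 = \alpha_2 - n/p_2$ and $r_1 \leq r_2$ one at a time.

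For sufficiency, assume the two conditions hold; once one orients the embedding toward Sobolev gain one has $\alpha_1 \geq \alpha_2$, and hence $p_1 \leq p_2$. The starting point is the classical Jawerth embedding
\[
F^{\alpha_1}_{p_1, q_1}(\R^n) \hookrightarrow F^{\alpha_2}_{p_2, q_2}(\R^n),
\]
valid for arbitrary microscopic indices $q_1, q_2 \in (0, \infty]$ under the dimension-matching condition. The next move is to identify the Lorentz-based space $F^{\alpha}_{q}[L^{p,r}]$ as a real interpolation space between two classical spaces $F^{\alpha}_{q}[L^{p^{(0)}}]$ and $F^{\alpha}_{q}[L^{p^{(1)}}]$, via a Peetre-type argument built on the vector-valued Littlewood--Paley square function representation of the $F$-norm. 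Applying Jawerth at two nearby endpoints $(p_1^{(i)}, p_2^{(i)})$, $i = 0,1$, flanking the given $(p_1, p_2)$ and obeying the same dimensional identity, and then interpolating in the Lions--Peetre functor $(\cdot, \cdot)_{\theta, r}$, produces the desired Lorentz-to-Lorentz embedding. The condition $r_1 \leq r_2$ enters naturally because that functor is monotone in its second (fine) parameter.

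For necessity I would test against two distinct families. First, for the dimensional identity, test on dilates $f_\lambda(x) = f(\lambda x)$ of a fixed smooth bump $f$: the quasi-norm obeys $\|f_\lambda\|_{F^{\alpha}_q[L^{p,r}]} \sim \lambda^{\alpha - n/p}$ as $\lambda \to \infty$ (and the dual rate as $\lambda \to 0$), and a two-sided asymptotic test forces $\alpha_1 - n/p_1 = \alpha_2 - n/p_2$. Second, to force $r_1 \leq r_2$, I would test on lacunary sums $\sum_k c_k \psi(x - x_k)$ of a single Schwartz bump $\psi$ translated to well-separated centers $x_k$, with Fourier support localized in one fixed dyadic annulus. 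On such configurations the $F$-norm collapses to a constant multiple of the $L^{p,r}$-norm of the sum (the square function has only one nonzero summand), so the embedding reduces to $L^{p_1, r_1} \hookrightarrow L^{p_2, r_2}$ on this class; an adequate choice of the coefficients $c_k$ saturates the distribution function and forces the monotonicity of the fine index.

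The main obstacle I foresee is the interpolation identification of $F^{\alpha}_{q}[L^{p,r}]$ in the low-regularity regime $0 < p < 1$ and/or small $q$, where the quasi-Banach nature of both the $F$-space and the Lorentz base complicates real interpolation arguments classically phrased via duality or wavelet/atomic decompositions. A secondary subtlety is making the necessity of $r_1 \leq r_2$ genuinely sharp across the full range of admissible $q_1, q_2$: a naive single-bump test only detects the dimensional identity, so one must rely on the single-scale lacunary construction above, which decouples the $\ell^{q}$ microscopic index from the spatial Lorentz fine structure and isolates exactly the comparison of $r_1$ and $r_2$.
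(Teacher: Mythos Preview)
The paper does not supply a proof of this statement; it is quoted from \cite[Theorem~1.6]{ST} and invoked as a black box in the proof of Theorem~\ref{mainthm}. There is therefore no in-paper argument to compare your proposal against.

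Judged on its own, your sufficiency strategy (Jawerth--Franke at two flanking endpoints, then real interpolation to pass from the $L^{p}$ base to the $L^{p,r}$ base) is the standard route and is essentially how the cited reference proceeds; likewise your scaling test for the dimensional identity $\alpha_{1}-n/p_{1}=\alpha_{2}-n/p_{2}$ is correct.

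Your test for the necessity of $r_{1}\leq r_{2}$, however, does not work as written. A single-scale lacunary sum $\sum_{k} c_{k}\,\psi(\cdot-x_{k})$ built from one fixed bump $\psi$ and well-separated centres has $L^{p,r}(\R^{n})$ norm comparable to the Lorentz \emph{sequence} norm $\|(c_{k})\|_{\ell^{p,r}}$, so on this family the putative embedding collapses to $\ell^{p_{1},r_{1}}\hookrightarrow \ell^{p_{2},r_{2}}$. For counting measure that inclusion holds automatically whenever $p_{1}<p_{2}$, irrespective of $r_{1},r_{2}$; hence in the only nontrivial regime $p_{1}<p_{2}$ your construction cannot see the fine index at all. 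Detecting the sharpness of $r_{1}\leq r_{2}$ requires a \emph{multi-scale} family: typically atoms $a_{j}$ supported on pairwise disjoint balls of radius $\sim 2^{-j}$, with amplitudes tuned to $2^{j(\alpha_{1}-n/p_{1})}$ so that the Littlewood--Paley pieces live at distinct dyadic levels and the $F^{\alpha}_{q}[L^{p,r}]$ norm of $\sum_{j} c_{j}a_{j}$ becomes an honest $\ell^{r}$-sum over the scale index $j$. Without such a construction the necessity half of the ``if and only if'' is incomplete.
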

These results can be found in  Han \cite[Theorem]{Han}, Jawerth \cite[Theorem 2.1]{Bj} and Sickel-Triebel \cite[Theorem 3.2.1]{SiTr} for the case $p=r.$
\subsection*{$\mathcal{L}$-superharmonic function}

We denote the positive part and negative one of a real-valued function $u$ by $u_{+}:=\max\{u,0\}$ and $u_{-}:=\max\{-u,0\},$ respectively.

\begin{Def}
A function $u:\R^{n}\to (-\infty,\infty]$ is said to be \textbf{$\mathcal{L}$-superharmonic} in $\Omega$ if it satisfies the following properties:
\begin{itemize}
\item[(i)] $u<+\infty$ almost everywhere in $\R^{n},$
\item[(ii)] $u$ is lower semicontinuous in $\Omega,$
\item[(iii)] for each $D\Subset\Omega$ and each weak solution $v \in \mathcal{C}(\bar{D})$ of $\mathcal{L}v=0$ in $D$ with $v_{+}\in L^{\infty}(\R^{n})$ such that $u\geq v$ on $\partial D,$ it holds that $u\geq v$ in $D,$
\item[(iv)] $u_{-}\in L^{p-1}_{\alpha p}(\R^{n}),$ where the tail space $L^{p-1}_{\alpha p}(\R^{n})$  is defined as 
\[
 L^{p-1}_{\alpha p}(\R^{n})=\Big\{u\in L^{p-1}_{\text{loc}}(\R^{n}):  \int_{\R^{n}} \frac{|u(y)|^{p-1}}{(1+|y|)^{n+\alpha p}} dy <+\infty    \Big\}.
 \]
\end{itemize}
\end{Def}
The notation $D\Subset \Omega$ means that $\bar{D}$ is a compact subset of $\Omega$.

 %\begin{Def}
 %The \textbf{tail space}  $L^{p-1}_{\alpha p}(\R^{n}),$ which is given by
 
% \end{Def}
% It is a well-known fact that the $W^{\alpha,p}(\R^{n})$ is the subset of the nonlocal tail.
% \begin{Def}
% The \textbf{nonlocal tail}  $\mathrm{Tail}(u;x_{0},r),$ for any $x_{0}\in \R^{n}$ and $r>0,$ is well defined if $u \in L^{p-1}_{\alpha p}(\R^{n}),$ where
% \[
% \mathrm{Tail}(u;x_{0},r)=\Big(r^{\alpha p} \int_{\R^{n}\setminus B(x_{0},r)} \frac{|u(y)|^{p-1}}{|y-x_{0}|^{n+\alpha p}} dy \Big)^{\frac{1}{p-1}}.
% \]
% \end{Def}
 %The nonlocal tail and the tail space have been introduced in the following works \cite{LFA}, \cite{ATG}, \cite{CKW}, \cite{KMS} and literature cited there.
\begin{Def}
A function $u\in W_{\text{loc}}^{\alpha,p}(\Omega)$ such that $u_{-}$ belongs $L^{p-1}_{\alpha p}(\R^{n})$ is a weak $\mathcal{L}$-supersolution of $\mathcal{L}u=0$
if
\[
<\mathcal{L}u,\phi>=\int_{\R^{n}}\int_{\R^{n}} |u(x)-u(y)|^{p-2}(u(x)-u(y)) (\phi(x)-\phi(y)) k(x,y) dxdy\geq 0
\]
for every nonnegative $\phi\in\mathcal{C}^{\infty}_{0}(\Omega).$
\end{Def}
\begin{Def}
A function $u$ is said to be $\mathcal{L}$-harmonic in $\Omega$ if it is a weak solution of $\mathcal{L}u=0$ in $\Omega$ that is continuous in $\Omega$.
\end{Def}

\begin{Thm}   [{\text See \cite{kimlee}}]    \label{thm:propsuper1} 
Let $u$ be a weak supersolution of $\mathcal{L}u=0$ in $\Omega$. Then
\[
u(x)= \mathrm{ess}\liminf_{{y\to x}} u(y)
\]
for a.e. $x\in\Omega$. In particular, $u$ has a representative that is lower semicontinuous in $\Omega$.
\end{Thm}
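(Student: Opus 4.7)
The plan is to construct an explicit lower semicontinuous representative of $u$ and show it coincides with $u$ almost everywhere. Define
\[
u^{*}(x) := \mathrm{ess}\liminf_{y\to x} u(y) = \lim_{r\to 0^{+}} \mathrm{ess\,inf}_{B(x,r)\cap\Omega} u(y), \qquad x\in\Omega.
\]
I need to verify three things: (a) $u^{*}$ is lower semicontinuous in $\Omega$; (b) $u^{*}(x)\leq u(x)$ for a.e.\ $x\in\Omega$; and (c) $u(x)\leq u^{*}(x)$ for a.e.\ $x\in\Omega$. Lower semicontinuity in (a) is a set-theoretic consequence of the monotonicity of $r\mapsto \mathrm{ess\,inf}_{B(x,r)} u$ in $r$: for $x_{j}\to x$ and any fixed $r>0$, one has $B(x_{j},r/2)\subset B(x,r)$ for all large $j$, so $\mathrm{ess\,inf}_{B(x,r)} u\leq \mathrm{ess\,inf}_{B(x_{j},r/2)} u\leq u^{*}(x_{j})$; letting $j\to\infty$ and then $r\to 0^{+}$ yields $u^{*}(x)\leq \liminf_{j} u^{*}(x_{j})$.

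Part (b) is elementary and independent of the PDE. For each rational $r>0$, the definition of essential infimum gives $\mathrm{ess\,inf}_{B(x,r)\cap\Omega} u\leq u(y)$ for a.e.\ $y\in B(x,r)\cap\Omega$, so Fubini produces a full-measure set of $x\in\Omega$ on which $\mathrm{ess\,inf}_{B(x,r)\cap\Omega} u\leq u(x)$. Intersecting over a countable sequence $r\to 0^{+}$ and taking the supremum over those $r$ delivers $u^{*}\leq u$ a.e.

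Part (c) is the heart of the statement and is where the weak supersolution property enters. The main tool is the weak Harnack inequality for nonnegative weak supersolutions of $\mathcal{L}v=0$, established for fractional $p$-Laplacian-type operators in the setting of this paper: for each ball $B(x_{0},2r)\Subset\Omega$ there exists $s\in(0,1)$, depending only on $n,p,\alpha,\Lambda$, such that
\[
\Big(\fint_{B(x_{0},r)} v^{s}\,dy\Big)^{1/s}\leq C\,\mathrm{ess\,inf}_{B(x_{0},r/2)} v + C r^{\alpha p/(p-1)}\,\mathrm{Tail}(v_{-};x_{0},r).
\]
Fix any Lebesgue point $x_{0}$ of $u$ (which is full measure since $u\in L^{1}_{\mathrm{loc}}$), and assume toward contradiction that $u(x_{0})>u^{*}(x_{0})+\eta$ for some $\eta>0$. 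Pick $\ell$ strictly between $u^{*}(x_{0})+\eta/2$ and $u(x_{0})$, and apply the weak Harnack to the truncated shift $v:=((u-\ell)_{+}\wedge M)$, which is a nonnegative weak supersolution in a small neighborhood of $x_{0}$. By the definition of $u^{*}(x_{0})$, $\mathrm{ess\,inf}_{B(x_{0},r/2)} v=0$ for all sufficiently small $r$; the hypothesis $u_{-}\in L^{p-1}_{\alpha p}(\R^{n})$ ensures that the tail term stays finite and vanishes as $r\to 0$. On the other hand, the Lebesgue point property at $x_{0}$ forces $\bigl(\fint_{B(x_{0},r)} v^{s}\,dy\bigr)^{1/s}$ to be bounded below by a positive constant for all small $r$, yielding the desired contradiction.

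The main obstacle is the careful handling of the nonlocal tail and the verification that the truncation $(u-\ell)_{+}\wedge M$ genuinely produces a nonnegative weak supersolution of $\mathcal{L}v=0$ to which the weak Harnack inequality applies. Unlike in the local case, compositions with convex increasing functions do not automatically preserve the weak supersolution property because the test-function pairing picks up long-range contributions; this is precisely where the tail-space hypothesis $u_{-}\in L^{p-1}_{\alpha p}(\R^{n})$ is indispensable, and it is the key analytic input that adapts the classical Kilpelainen--Maly scheme for $\mathcal{A}$-superharmonic functions to the nonlocal framework developed in the Kim--Lee reference.
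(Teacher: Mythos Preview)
The paper does not supply a proof of this theorem; it is quoted as a preliminary fact from the reference \cite{kimlee}, with no argument given. There is therefore nothing in the paper to compare your proposal against.

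On the merits of your sketch: parts (a) and (b) are routine and correct, and the overall strategy for (c)---reduce to the weak Harnack inequality with tail and argue at Lebesgue points---is exactly what \cite{kimlee} (following Di~Castro--Kuusi--Palatucci) does. However, your specific implementation of (c) has a real gap. The truncation $v=(u-\ell)_{+}\wedge M$ is \emph{not} in general a weak supersolution of $\mathcal{L}v=0$ in the nonlocal setting: the bilinear form pairs values of $v$ across all of $\R^{n}$, and cutting the function from above or below destroys the sign of $\langle \mathcal{L}v,\phi\rangle$ for nonnegative $\phi$. You correctly flag this as an ``obstacle'' but do not resolve it, and it cannot be resolved along those lines---this is a well-known difference from the local $\mathcal{A}$-superharmonic theory.

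The standard fix avoids truncation entirely. Choose $k<\mathrm{ess\,inf}_{B(x_{0},R)}u$ (finite since $u\in L^{p-1}_{\mathrm{loc}}$) and note that $u-k$ is still a weak supersolution because constants are $\mathcal{L}$-harmonic. Now $u-k\geq 0$ in $B(x_{0},R)$, and the nonlocal weak Harnack inequality applies directly to $u-k$ on concentric balls inside $B(x_{0},R)$; the tail of $(u-k)_{-}$ is controlled since $(u-k)_{-}\leq |k|+u_{-}\in L^{p-1}_{\alpha p}(\R^{n})$. Letting $k\uparrow \mathrm{ess\,inf}_{B(x_{0},R)}u$ and then $R\to 0$ at a Lebesgue point $x_{0}$ gives $u(x_{0})\leq u^{*}(x_{0})$, completing the argument without any truncation step.
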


\begin{Thm}   [{\text See \cite{kimlee}}]    \label{thm:propsuper2} 
If $u$ is an $\mathcal{L}$-superharmonic function in $\Omega,$ then 
\[
u(x)= \liminf_{{{y\to x}} } u(y)= \mathrm{ess}\liminf_{{y\to x}} u(y)\quad\text{for every}\quad x\in\Omega.
\]
If, in addition, $u$ is locally bounded in $\Omega$ or $u\in W_{\mathrm{loc}}^{\alpha,p}(\Omega)$, then $u$ is a weak supersolution in $\Omega$.
\end{Thm}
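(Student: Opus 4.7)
The plan is to prove the two claims in sequence, both reducing to Theorem \ref{thm:propsuper1} for weak supersolutions via truncation. For the first claim, one inequality is immediate: by hypothesis (ii) in the definition of $\mathcal{L}$-superharmonic, $u$ is lower semicontinuous in $\Omega$, so
\[
u(x)\;\leq\;\liminf_{y\to x} u(y)\;\leq\;\mathrm{ess}\liminf_{y\to x} u(y)\qquad\text{for every }x\in\Omega,
\]
where the last inequality holds because the pointwise liminf is taken over a larger family than the essential liminf (any neighborhood, not just a neighborhood modulo null sets). The nontrivial step is the reverse inequality $\mathrm{ess}\liminf_{y\to x}u(y)\le u(x)$.

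For this, I would introduce the truncations $u_k:=\min(u,k)$ for $k\in\mathbb{N}$ and argue that each $u_k$ is a bounded weak $\mathcal{L}$-supersolution in $\Omega$. This is the standard route in the nonlocal potential theory of $\mathcal{L}$-superharmonic functions (see the framework in Korvenp\"a\"a--Kuusi--Palatucci and Kim--Lee underlying the cited results): boundedness from above together with the tail assumption $u_-\in L^{p-1}_{\alpha p}(\R^n)$ forces $u_k\in W^{\alpha,p}_{\mathrm{loc}}(\Omega)$, and the comparison property (iii) in the definition of $\mathcal{L}$-superharmonicity, applied against $\mathcal{L}$-harmonic replacements of $u_k$ on compactly contained smooth subdomains, yields nonnegativity of $\langle \mathcal{L}u_k,\varphi\rangle$ for every nonnegative $\varphi\in \mathcal{C}^\infty_0(\Omega)$. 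Once each $u_k$ is known to be a weak supersolution, Theorem \ref{thm:propsuper1} gives
\[
u_k(x)=\mathrm{ess}\liminf_{y\to x} u_k(y)\qquad\text{for a.e.\ }x\in\Omega.
\]
Because $u_k\nearrow u$ pointwise and the essential liminf is monotone in the function, taking $k\to\infty$ produces $u(x)=\mathrm{ess}\liminf_{y\to x}u(y)$ a.e.\ in $\Omega$, and the already-established lower semicontinuity, combined with hypothesis (ii) of the definition (which fixes the precise representative by $u=\liminf$), upgrades this a.e.\ identity to every $x\in\Omega$.

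For the second claim, the locally bounded case is essentially immediate: on any $D\Subset\Omega$, if $u\le M$ on $D$ then $u=u_M$ on $D$ and the construction above has already shown $u_M$ is a weak supersolution, so $u$ itself is a weak supersolution in $\Omega$. The $W^{\alpha,p}_{\mathrm{loc}}$ case requires a limit passage: for each fixed nonnegative $\varphi\in\mathcal{C}^\infty_0(\Omega)$ supported in some $D\Subset\Omega$, split the double integral defining $\langle \mathcal{L}u_k,\varphi\rangle$ into the local piece on $D\times D$ and the nonlocal tail piece on $(\R^n\times\R^n)\setminus(D\times D)$; on the local piece use $u_k\to u$ in $W^{\alpha,p}(D)$ and dominated convergence against the $|x-y|^{-n-\alpha p}$ kernel restricted to $\varphi\neq 0$, while on the tail piece use the uniform bound $|u_k|\le |u|$ together with the tail integrability $u_-\in L^{p-1}_{\alpha p}(\R^n)$ and $u\in L^{p-1}_{\mathrm{loc}}(\R^n)$ to apply dominated convergence, giving $\langle \mathcal{L}u,\varphi\rangle\geq 0$.

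The main obstacle I expect is the justification that each truncation $u_k$ is a weak supersolution: this requires combining the abstract comparison property (iii) with regularity/energy information for $\mathcal{L}$-harmonic solutions in $\mathcal{C}(\bar D)$ in order to produce a legitimate test-function inequality, and it is where the nonlocal tail space $L^{p-1}_{\alpha p}(\R^n)$ (rather than a purely local integrability hypothesis) enters essentially. The subsequent tail-splitting in the $W^{\alpha,p}_{\mathrm{loc}}$ limit passage is technically delicate for $1<p<2$ because $t\mapsto |t|^{p-2}t$ is only H\"older continuous, so the dominated convergence argument must be controlled by $|u(x)-u(y)|^{p-1}$ rather than by linear differences.
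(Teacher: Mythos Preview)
The paper does not prove this theorem at all: it is quoted as a background result with the attribution ``See \cite{kimlee}'' and no argument is given. So there is no proof in the paper to compare your proposal against.

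That said, your outline is a reasonable sketch of the standard approach in the nonlocal potential theory literature (Korvenp\"a\"a--Kuusi--Palatucci, Kim--Lee) that the paper is citing. One spot where your argument is loose: from Theorem~\ref{thm:propsuper1} you only obtain $u_k(x)=\mathrm{ess}\liminf_{y\to x}u_k(y)$ for \emph{almost every} $x$, and you then claim that lower semicontinuity ``upgrades this a.e.\ identity to every $x\in\Omega$.'' Lower semicontinuity by itself does not do this---it gives only $u(x)\le\mathrm{ess}\liminf_{y\to x}u(y)$, not the reverse. The usual way to close this gap is to use the comparison property (iii) directly at the given point $x_0$: compare $u$ with the $\mathcal{L}$-harmonic replacement $h$ of $u_k$ on a small ball $B(x_0,r)$, and invoke interior H\"older continuity of $h$ together with the weak Harnack inequality to control $h(x_0)$ by essential infima of $u_k$, then let $r\to 0$. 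Your sketch correctly identifies that the truncation step (showing each $u_k$ is a genuine weak supersolution with local $W^{\alpha,p}$ energy) is the technical heart, and your remarks on the $1<p<2$ tail-splitting are apt.
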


\begin{Thm}   [{\text See \cite{kimlee}}]    \label{thm:propsuper3} 
If $u$ is a  weak supersolution of $\mathcal{L}u=0$ in $\Omega$ that is lower semicontinuous  satisfying
\[
u(x)= \mathrm{ess}\liminf_{{y\to x}} u(y)
\]
for every $x\in\Omega$, then $u$ is $\mathcal{L}$-superharmonic in $\Omega$.
\end{Thm}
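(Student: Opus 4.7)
The plan is to verify each of the four clauses in the definition of $\mathcal{L}$-superharmonic. Clause (ii) is the standing hypothesis; clause (iv) is built into the class of weak supersolutions; and clause (i) follows from $u \in W^{\alpha,p}_{\mathrm{loc}}(\Omega)$ combined with the tail condition on $u_{-}$. The substantive content is therefore the comparison principle (iii), which I attack by the standard Leray--Lions test-function argument, adapted to the nonlocal setting.

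Fix $D \Subset \Omega$ and let $v \in \mathcal{C}(\bar{D})$ be a weak $\mathcal{L}$-solution in $D$ with $v_{+} \in L^{\infty}(\R^{n})$ and $u \geq v$ on $\partial D$. For $\epsilon > 0$ I set
\[
\eta_{\epsilon} := (v - u - \epsilon)_{+}.
\]
Because $v$ is continuous on $\bar{D}$ and $u$ is lower semicontinuous, $v - u - \epsilon$ is upper semicontinuous and at most $-\epsilon$ on $\partial D$. Hence the open set $\{v - u - \epsilon > 0\}$ is compactly contained in $D$, so $\eta_{\epsilon}$ (extended by zero) lies in $W^{\alpha,p}_{0}(D)$; after mollification it is admissible as a nonnegative test function against $\mathcal{L}$.

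Inserting $\eta_{\epsilon}$ into the supersolution inequality for $u$ and into the weak identity for $v$, and subtracting, gives
\[
\int_{\R^{n}}\!\!\int_{\R^{n}}\!\bigl[|v(x)-v(y)|^{p-2}(v(x)-v(y)) - |u(x)-u(y)|^{p-2}(u(x)-u(y))\bigr]\bigl(\eta_{\epsilon}(x) - \eta_{\epsilon}(y)\bigr) k(x,y)\, dx\, dy \leq 0.
\]
Writing $f := v - u - \epsilon$, a case split on the signs of $f(x), f(y)$ shows that $\eta_{\epsilon}(x) - \eta_{\epsilon}(y)$ shares the sign of $f(x) - f(y) = (v(x)-v(y)) - (u(x)-u(y))$, and the strict monotonicity of $s \mapsto |s|^{p-2}s$ makes the bracketed factor share that sign too. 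The integrand is therefore pointwise nonnegative, so the displayed inequality forces it to vanish a.e. Strict monotonicity then implies that at $(x,y)$ with $f(x) > 0$ and $f(y) \leq 0$ the integrand is strictly positive, so the set $\{f>0\} \times \{f\leq 0\}$ has measure zero. Since $\{f > 0\}$ is compactly supported while $\{f \leq 0\}$ has infinite measure, this forces $|\{f > 0\}| = 0$, i.e.\ $v \leq u + \epsilon$ a.e.\ in $D$.

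Letting $\epsilon \to 0^{+}$ yields $v \leq u$ a.e.\ in $D$. To upgrade this to a pointwise inequality, I invoke the essential liminf representative hypothesis: for each $x \in D$,
\[
u(x) = \mathrm{ess}\liminf_{y \to x} u(y) \;\geq\; \mathrm{ess}\liminf_{y \to x} v(y) \;=\; v(x),
\]
the last equality by continuity of $v$. Hence $u \geq v$ throughout $D$, completing the verification of (iii). The main obstacle is the test-function step: admissibility of $\eta_{\epsilon}$ requires a nonlocal density/approximation argument, and the sign bookkeeping on the cross terms (where $\eta_{\epsilon}$ vanishes at exactly one endpoint) must be done carefully using the strict monotonicity of $s \mapsto |s|^{p-2}s$ in the range $1 < p < 2$. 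Both are by now standard in the fractional $p$-Laplacian literature and are carried out in detail in \cite{kimlee}.
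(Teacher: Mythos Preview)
The paper does not supply a proof of this statement; it is quoted from \cite{kimlee} in the preliminaries without argument, so there is no in-paper proof to compare your attempt against. Your route --- test $(v-u-\epsilon)_{+}$ against both the supersolution inequality for $u$ and the equation for $v$, then exploit the monotonicity of $s\mapsto|s|^{p-2}s$ --- is exactly the standard one in the fractional $p$-Laplacian literature.

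There is, however, one genuine gap in your sign bookkeeping. Your case split on $f(x),f(y)$ presupposes that $\eta_{\epsilon}=(v-u-\epsilon)_{+}$ \emph{globally} on $\R^{n}$. With the hypothesis as literally written in the paper's definition~(iii) (namely $u\ge v$ only on $\partial D$), this is not guaranteed: for $y\in\R^{n}\setminus D$ one may have $f(y)>0$ while your extended $\eta_{\epsilon}(y)=0$. At a cross pair $(x,y)$ with $x\in D$, $f(x)>0$ and such a $y$, one has $\eta_{\epsilon}(x)-\eta_{\epsilon}(y)=f(x)>0$, but the bracketed factor has the sign of $f(x)-f(y)$, which can be negative; the integrand is then negative and the ``nonnegative integrand forced to vanish'' step collapses. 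In the source \cite{kimlee} (and in the Korvenp\"{a}\"{a}--Kuusi--Palatucci framework) the comparison clause additionally requires $u\ge v$ a.e.\ on $\R^{n}\setminus D$; the present paper's definition has silently dropped this. With that extra exterior hypothesis your $\eta_{\epsilon}$ really does coincide with $(v-u-\epsilon)_{+}$ everywhere, the cross terms carry the correct sign, and your argument goes through. So the gap is real but is an artefact of the paper's abbreviated definition rather than a flaw in your method.
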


\begin{Cor}
If $u$ is a supersolution of $\mathcal{L}u=0$ in $\Omega$, then there is a superharmonic function $v$ in $\Omega$ such that $u=v$ a.e.
\end{Cor}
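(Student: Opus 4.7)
The plan is to invoke the three preceding structural theorems (Theorems \ref{thm:propsuper1}, \ref{thm:propsuper2}, \ref{thm:propsuper3}) to produce a pointwise representative of $u$ that automatically lies in the $\mathcal{L}$-superharmonic class. The whole point of the corollary is that being a weak supersolution is an integral/a.e.\ condition, while being $\mathcal{L}$-superharmonic requires a specific pointwise representative; the three theorems above say that the natural candidate --- the essential liminf regularization --- bridges the two notions.

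Concretely, I would define
\[
v(x) := \operatorname*{ess\,liminf}_{y\to x} u(y), \qquad x\in\Omega,
\]
and set $v=u$ on $\R^n\setminus\Omega$. By Theorem \ref{thm:propsuper1}, $v(x)=u(x)$ for a.e.\ $x\in\Omega$, hence $v=u$ a.e.\ on $\R^n$. In particular $v\in W^{\alpha,p}_{\mathrm{loc}}(\Omega)$, $v_-\in L^{p-1}_{\alpha p}(\R^n)$, and the weak supersolution integral inequality for $v$ against any nonnegative $\phi\in\mathcal{C}_0^\infty(\Omega)$ is identical to the one for $u$ (the integrand depends on $v$ only through its a.e.\ values), so $v$ is also a weak supersolution of $\mathcal{L}v=0$ in $\Omega$.

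Next I would verify the two hypotheses required by Theorem \ref{thm:propsuper3}. Lower semicontinuity of $v$ on $\Omega$ is a standard property of the essential liminf regularization: for every $x_0\in\Omega$ and every $\lambda<v(x_0)$ there exists $r>0$ such that $u>\lambda$ a.e.\ on $B(x_0,r)$, which then forces $v\ge\lambda$ on $B(x_0,r)$. The identity $v(x)=\operatorname*{ess\,liminf}_{y\to x}v(y)$ for every $x\in\Omega$ follows from $v=u$ a.e., since the essential liminf depends only on the a.e.\ equivalence class, combined with the definition of $v$.

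Having established these two properties, Theorem \ref{thm:propsuper3} applies directly and yields that $v$ is $\mathcal{L}$-superharmonic in $\Omega$, completing the proof. The only slightly delicate point --- and the one I would check most carefully --- is the pointwise self-consistency of the essential liminf operation, i.e.\ that applying the operation twice gives the same function; this is the mechanism by which Theorems \ref{thm:propsuper1} and \ref{thm:propsuper3} combine to give the corollary, and it is the step where one must be careful not to lose the exceptional set when passing from the a.e.\ identity of Theorem \ref{thm:propsuper1} to the everywhere identity required by Theorem \ref{thm:propsuper3}.
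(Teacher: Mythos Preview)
Your proposal is correct and is exactly the approach the paper has in mind: the Corollary is stated without proof precisely because it follows from Theorems~\ref{thm:propsuper1} and~\ref{thm:propsuper3} by passing to the lower semicontinuous essential-liminf representative, just as you outline. The delicate point you flag---that $v(x)=\operatorname*{ess\,liminf}_{y\to x}v(y)$ holds \emph{everywhere} because the essential liminf depends only on the a.e.\ class and $v=u$ a.e.---is indeed the key observation that makes Theorem~\ref{thm:propsuper3} applicable, and you have identified it correctly.
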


\begin{Thm}   [{\text See \cite{MKS}}]    \label{thm:rieszmeasure} 
Let $u$ be superharmonic  in $\Omega$. Then there exists a unique nonnegative locally finite Borel measure $\mu$ in $\Omega$ such that
\[
\mathcal{L}u=\mu
\]
in the sense of distribution, that is,
\[
<\mathcal{L}u,\phi>=\int_{{\R}^{n}} \phi \ d\mu
\]
holds whenever $\phi\in\mathcal{C}^{\infty}_{0}(\Omega)$, if $u=0$ in $\R^{n}\setminus\Omega$.
\end{Thm}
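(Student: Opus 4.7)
The plan is to adapt the Kilpel\"{a}inen--Mal\'{y} truncation approach from the local $p$-Laplace theory to the present nonlocal nonlinear framework. First I would truncate at height $k\in\N$ by setting $u_k:=\min\{u,k\}$. By the standard fact in nonlinear potential theory (the $\mathcal{L}$-analogue of: the minimum of an $\mathcal{L}$-superharmonic function with a constant remains $\mathcal{L}$-superharmonic), each $u_k$ is a bounded, lower semicontinuous $\mathcal{L}$-superharmonic function in $\Omega$. Since $u_k$ is locally bounded, Theorem~\ref{thm:propsuper2} places $u_k \in W^{\alpha,p}_{\mathrm{loc}}(\Omega)$ and makes it a weak $\mathcal{L}$-supersolution. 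Consequently $\phi \mapsto \langle \mathcal{L}u_k,\phi\rangle$ is a nonnegative distribution on $\Omega$, and the classical Riesz representation for positive distributions produces a unique locally finite nonnegative Borel measure $\mu_k$ on $\Omega$ with $\mathcal{L}u_k = \mu_k$.

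Next I would establish uniform local mass bounds: for every $K \Subset \Omega$ there is $C(K)$ with $\mu_k(K) \le C(K)$ for all $k$. Testing against a cutoff $\eta \in C_0^\infty(\Omega)$ with $0 \le \eta \le 1$ and $\eta \equiv 1$ on $K$,
\[
\mu_k(K) \le \int_\Omega \eta\, d\mu_k = \int_{\R^n}\!\int_{\R^n} |u_k(x)-u_k(y)|^{p-2}(u_k(x)-u_k(y))(\eta(x)-\eta(y))\, k(x,y)\, dx\, dy,
\]
and a fractional Caccioppoli-type estimate, using the hypothesis $u=0$ in $\R^n\setminus\Omega$ to control the tail integrals, bounds the right-hand side uniformly in $k$ by constants depending only on the local $W^{\alpha,p}$-energy of $u$ near $\mathrm{supp}\,\eta$ and $\|u_-\|_{L^{p-1}_{\alpha p}(\R^n)}$. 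Weak-$\ast$ compactness in the space of Radon measures then furnishes a subsequence $\mu_{k_j} \rightharpoonup \mu \in \mathcal{M}^+(\Omega)$.

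The heart of the proof is passing to the limit in
\[
\langle \mathcal{L}u_k,\phi\rangle = \int_{\R^n}\phi\, d\mu_k \longrightarrow \int_{\R^n}\phi\, d\mu
\]
for every $\phi \in C_0^\infty(\Omega)$. Since $u_k \nearrow u$ pointwise, I would apply dominated convergence to the integrand $|u_k(x)-u_k(y)|^{p-2}(u_k(x)-u_k(y))(\phi(x)-\phi(y)) k(x,y)$, splitting the domain into a diagonal region $|x-y|<1$ within a neighborhood of $\mathrm{supp}\,\phi$, a bounded off-diagonal region, and the tail region where $|x|+|y|$ is large; in each part a $k$-independent integrable majorant is produced from the local $W^{\alpha,p}$-energy of $u$ or from its $L^{p-1}_{\alpha p}$-norm. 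Uniqueness of $\mu$ follows from the density of $C_0^\infty(\Omega)$ in $C_0(\Omega)$ and incidentally removes the need to pass to a subsequence. The principal obstacle is precisely this limit step: because $t \mapsto |t|^{p-2}t$ is not Lipschitz near the origin when $1<p<2$ and the kernel is strongly singular on the diagonal, constructing a single integrable majorant uniform in $k$ demands careful fractional Caccioppoli-type estimates in each regime together with symmetrization $(x,y) \leftrightarrow (y,x)$ to exploit the antisymmetry of $u_k(x)-u_k(y)$ against the symmetric kernel $k(x,y)$.
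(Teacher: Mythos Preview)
The paper does not supply its own proof of this theorem; it is quoted from \cite{MKS}. So there is no in-paper argument to compare against, and your proposal should be assessed on its own merits.

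Your overall architecture---truncate, get measures $\mu_k$ for the bounded weak supersolutions $u_k=\min\{u,k\}$, extract a weak-$\ast$ limit, and pass to the limit in $\langle\mathcal{L}u_k,\phi\rangle$---is the standard and correct strategy in this setting. The gap is in how you justify the uniform bounds. You twice invoke ``the local $W^{\alpha,p}$-energy of $u$'': once for the uniform mass bound $\mu_k(K)\le C(K)$, and once for the $k$-independent integrable majorant in the dominated convergence step. But an $\mathcal{L}$-superharmonic function need not lie in $W^{\alpha,p}_{\mathrm{loc}}(\Omega)$; that failure is precisely why the notion of superharmonicity is broader than that of weak supersolution (think of the fractional analogue of the $p$-Laplace fundamental solution). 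So neither of these bounds can be grounded in a quantity that may well be infinite.

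What the argument actually needs---and what the reference \cite{MKS} (and related work of Korvenp\"a\"a--Kuusi--Palatucci) supplies---is a lower-order regularity result for $\mathcal{L}$-superharmonic functions: one shows that $u\in W^{\beta,q}_{\mathrm{loc}}(\Omega)$ for suitable $\beta<\alpha$ and $q$ below a threshold depending on $p$, together with uniform Caccioppoli-type bounds on the truncations $u_k$ expressed in terms of local $L^r$-norms of $u$ (for some $r$ in the admissible range) and the tail, not in terms of the $W^{\alpha,p}$-seminorm of $u$. With these ingredients in hand your dominated convergence step and the uniform mass bound go through; without them, the proposal as written has a genuine circularity.
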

The measure $\mu$ given in the above Theorem \ref{thm:rieszmeasure} is called \textbf{Riesz measure}.

\subsection*{Some recent results on potentials}
The proof of main Theorem \ref{mainthm} is based on pointwise estimates of $\mathcal{L}$-superharmonic functions in terms of Wolff potentials. We recall the definition of Wolff potential with fractional orders (see \cite{AH}, \cite{KMS}).

For $\alpha>0, p>1$ such that $\alpha p<n,$ the \textbf{Wolff potential} $\mathbf{W}_{\alpha,p}\sigma(x)$ is defined by
\[
 \mathbf{W}_{\alpha,p}\sigma(x)=\int_{0}^{\infty} \Big[\frac{\sigma(B(x,r))}{r^{n-\alpha p} }\Big]^{\frac{1}{p-1}}\frac{dr}{r},\quad x\in\R^{n}.
 \]
where $\sigma\in\mathcal{M}^{+}(\R^{n})$. In the case when $p=2$, it reduces (up to a normalization constant) to the Riesz potential of order $\alpha$ for $0<\alpha<n$, which can be defined as follows
 \[
 \mathbf{I}_{\alpha}\sigma(x)=C(n,\alpha)\int_{\R^{n}}\frac{\sigma(B(x,r))}{r^{n-\alpha}}\frac{dr}{r},\quad x\in \R^{n},
 \]
with a normalization constant
\[
C(n,\alpha)=\frac{\Gamma(\frac{n-\alpha}{2})}{2^{\alpha}\pi^{\frac{n}{2}}\Gamma\frac{\alpha}{2}}.
\] 
The Riesz potential satisfies the following semigroup property 
\[
\mathbf{I}_{\alpha}\mathbf{I}_{\beta}f=\mathbf{I}_{\alpha+\beta}f,\quad\text{for}\quad \alpha,\beta>0, \alpha+\beta<n.
\]
\begin{Rem}
The Riesz potential $\mathbf{I}_{\alpha}$ is an isomorphism from $F^{s}_{q}[L^{p,r}]$ onto $F^{s-\alpha}_{q}[L^{p,r}]$ and 
\be\label{riesz}
\|\mathbf{I}_{\alpha}f\|_{F^{s}_{q}[L^{p,r}]}=\|f\|_{F^{s-\alpha}_{q}[L^{p,r}]},
\ee
see \cite{BCT}.
\end{Rem}
In our case, we will use the following elementary inequality when we deduce from Wolff potential to Riesz potential: for every $R>0$,
\[
\Big( \int_{0}^{R}\big( \frac{\phi(r)}{r^{\gamma}}\big)^{\frac{1}{p-1}}\frac{dr}{r}\Big)^{p-1}\leq c(p,\gamma)\int_{0}^{2R}\frac{\phi(r)}{r^{\gamma}}\frac{dr}{r},
\]
where $\gamma>0, 1<p<2,$ and $\phi$ is a non-decreasing function on $(0,\infty)$. In particular, we have
	\bea\label{ineq}
\mathbf{W}_{\alpha,p}\sigma(x)&&=\int_{0}^{\infty} \Big[\frac{\sigma(B(x,r))}{r^{n-\alpha p} }\Big]^{\frac{1}{p-1}}\frac{dr}{r}\nonumber\\
&&\leq c \Big[ \int_{0}^{\infty}\frac{\sigma(B(x,r))}{r^{n-\alpha p} }\frac{dr}{r}\Big]^{\frac{1}{p-1}}=c[\mathbf{I}_{\alpha p}\sigma(x)]^{\frac{1}{p-1}}.
\eea
\begin{Thm}[{\text See \cite{MKS}}] \label{Pointwiseupperbound}
Let $n\in\N,\ 0<\alpha<1$ and $1<p\leq\frac{n}{\alpha}$. Let $\mu\in\mathcal{M}^{+}(\Omega)$ where $\Omega$ is an bounded open set in $\R^{n}$. Let $u$ is an $\mathcal{L}$-superharmonic function $\R^{n}$ with $u=0$ in $\R^{n}\setminus\Omega$. If $\mathcal{L}u=\mu,$ then there exists $C=C(n,p,q,\alpha)>0$ such that
\[\label{global}
C^{-1}\mathbf{W}_{\alpha,p}\mu(x)\leq u(x)\leq C \mathbf{W}_{\alpha,p}\mu(x)\quad\text{for all}\quad x\in\R^{n}.
\]
\end{Thm}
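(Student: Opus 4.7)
The plan is to obtain the two-sided Wolff-potential bound by a dyadic iteration that combines a Kilpel\"ainen--Mal\'y-type pointwise estimate for $\mathcal{L}$-supersolutions (for the upper bound) with a comparison against solutions of obstacle problems on concentric balls (for the lower bound). Fix $x \in \R^n$ and the dyadic chain of balls $B_j := B(x,\,r_j)$ with $r_j = 2^{-j}R$, where $R$ is chosen large enough that $\Omega \subset B(x,R)$. The hypothesis $u = 0$ in $\R^n \setminus \Omega$ will kill the tail and infimum corrections that normally appear in the nonlocal theory, yielding a clean two-sided bound.

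For the upper bound, the starting point is the nonlocal analogue of the Kilpel\"ainen--Mal\'y inequality: for any $\mathcal{L}$-supersolution $u$ and any ball $B(y,\rho)$,
\[
u(y) \leq c \Big[\frac{\mu(B(y,2\rho))}{\rho^{\,n-\alpha p}}\Big]^{\frac{1}{p-1}} + c\inf_{B(y,\rho)} u + c\,\mathrm{Tail}(u_-;\, y,\rho),
\]
at every Lebesgue point, where the tail term encodes the nonlocal contribution of $u_-$ outside $B(y,\rho)$. I would derive this by comparing $u$ with the $\mathcal{L}$-harmonic replacement on $B(y,\rho)$, running a De Giorgi iteration on super-level sets, and invoking a logarithmic Caccioppoli estimate adapted to the kernel $k(x,y)$. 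Applying the inequality with $y=x$ and $\rho = r_j$ and telescoping the resulting bound $\inf_{B_j} u - \inf_{B_{j+1}} u \gtrsim -[\mu(B_j)/r_j^{n-\alpha p}]^{1/(p-1)} - \mathrm{Tail}(u_-;x,r_j)$ as $j\to\infty$, the superharmonicity together with $u \geq 0$ in $\Omega$ and $u=0$ outside forces $u_- \equiv 0$, so the tail terms vanish identically, $\inf_{B_j} u \to u(x)$ by lower semicontinuity (Theorem \ref{thm:propsuper2}), and summing produces $u(x) \leq C\,\mathbf{W}_{\alpha,p}\mu(x)$.

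For the lower bound, for each $r>0$ let $v_r$ denote the $\mathcal{L}$-superharmonic solution of the obstacle-type Dirichlet problem $\mathcal{L} v_r = \mu|_{B(x,r)}$ in $B(x,r)$ with $v_r = 0$ in $\R^n \setminus B(x,r)$. The nonlocal comparison principle (underlying Theorems \ref{thm:propsuper1}--\ref{thm:propsuper3}) together with $u \geq v_r$ on $\partial B(x,r)$ (since $u \geq 0$ and $v_r$ vanishes outside) yields $u \geq v_r$ pointwise. A standard energy/duality argument on the obstacle problem gives the single-scale bound $v_r(x) \geq c\,[\mu(B(x,r/2))/r^{n-\alpha p}]^{1/(p-1)}$; summing the disjoint contributions over $r=r_j$ via the dyadic decomposition of $(0,\infty)$ in the definition of $\mathbf{W}_{\alpha,p}\mu$ then delivers $u(x) \geq C^{-1}\mathbf{W}_{\alpha,p}\mu(x)$.

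The main obstacle is the tail machinery: in the nonlocal setting the Caccioppoli-type estimate intrinsically couples local and global behaviour of $u$, so the pointwise inequality above carries an unavoidable $\mathrm{Tail}(u_-;y,\rho)$ term that must be controlled uniformly across dyadic scales. The structural hypothesis $u = 0$ outside $\Omega$ (which together with $\mathcal{L}$-superharmonicity gives $u \geq 0$ everywhere) is precisely what trivialises this tail and permits the telescoping to close. A secondary subtlety is the borderline case $\alpha p = n$, where a logarithmic Wolff potential replaces the algebraic one; here the hypothesis $1 < p \leq n/\alpha$ in the statement and the tail-space condition $u_- \in L^{p-1}_{\alpha p}(\R^n)$ built into the definition of $\mathcal{L}$-superharmonicity are what make the finite Wolff potential bound meaningful at every point.
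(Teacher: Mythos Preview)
The paper does not prove this theorem; it is quoted from \cite{MKS} and used as a black box in the proof of Theorem~\ref{mainthm}. So there is no proof in the paper to compare your proposal against.

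That said, your outline is broadly the strategy used in the nonlocal potential-theory literature (e.g.\ the Kuusi--Mingione--Sire and Korvenp\"a\"a--Kuusi--Palatucci circle of results): a Kilpel\"ainen--Mal\'y-type iteration with tail control for the upper bound, and comparison with local Dirichlet/obstacle solutions for the lower bound. Two places where your sketch is a bit loose: first, the lower bound is not obtained by ``summing disjoint contributions'' of the $v_{r_j}$---those functions are not additive in any useful sense---but rather by an iterative scheme that propagates the single-scale estimate $\inf_{B_{j+1}} u \geq \inf_{B_j} u + c\,[\mu(B_j)/r_j^{n-\alpha p}]^{1/(p-1)}$ (modulo tail corrections) through the dyadic chain; second, the claim $u\geq 0$ everywhere requires invoking a nonlocal minimum principle for $\mathcal{L}$-superharmonic functions with zero exterior data, which is itself a nontrivial ingredient in this setting. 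Both points are handled in \cite{MKS}, but your sketch would need to make them explicit to stand on its own.
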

\begin{comment}
\begin{Thm}[{\text See \cite{PV1} }]\label{wolffineq}
Let $\mu\in\mathcal{M}^{+}(\R^{n}), \alpha>0, p>1,$ and $q>p-1$. Then the following quantities are equivalent.
\begin{itemize}
\item[(a)]$ \displaystyle \int_{\R^{n}} \W_{\alpha p,\frac{q}{q-p+1}}\mu\;d\mu =\int_{\R^{n}}\int_{0}^{\infty} \Big[\frac{\mu(B(x,t))}{t^{n-\frac{\alpha pq}{q-p+1}}}\Big]^{\frac{q}{p-1}-1}\frac{dt}{t}\ d\mu,$

\item[(b)] $\displaystyle \int_{\R^{n}} (\W_{\alpha,p}\mu)^{q}\ dx =\int_{\R^{n}} \Big\{\int_{0}^{\infty} \Big[\frac{\mu(B(x,t))}{t^{n-\alpha p}} \Big]^{\frac{1}{p-1}} \frac{dt}{t} \Big\}^{q} \ dx $

\item[(c)] $\displaystyle\int_{\R^{n}} (\I_{\alpha p}\mu )^{\frac{q}{p-1}}\ dx=\int_{\R^{n}}\Big[\int_{0}^{\infty} \frac{\mu(B(x,t))}{t^{n-\alpha p}}^{\frac{q}{p-1}} \frac{dt}{t} \Big]\; dx $
\end{itemize}
where the constants of equivalence do not depend on $\mu$ and $r$.
\end{Thm}
\end{comment}

%%%%%%%%%%%%%%%%%%%%%%%%%%%%%%%%%%%%
\section{Proof of Theorem \ref{mainthm}}\label{sec3}
\begin{proof}
Denote by $\mu=\mu[u]$  the Riesz measure associated with the $\mathcal{L}$-superharmonic function $u$. Employing Theorem \ref{Pointwiseupperbound} and the  inequality \eqref{ineq}, we find that
\begin{align*}
\|u\|_{L^{\frac{nq}{n-\alpha q},t}(\R^{n})}
&\leq c\|\W_{\alpha,p}\mu(x)\|_{L^{\frac{nq}{n-\alpha q},t}(\R^{n})}\\
&\leq\|[ \I_{\alpha p}\mu(x)]^{\frac{1}{p-1}}\|_{L^{\frac{nq}{n-\alpha q},t}(\R^{n})}\\
&=\|\I_{\alpha p}\mu(x)\|_{L^{\frac{nq}{(n-\alpha q)(p-1)},\frac{t}{p-1}}(\R^{n})}^{\frac{1}{p-1}}.
\end{align*}
By the seminorm property of Riesz potential, we have
\[
\|u\|_{L^{\frac{nq}{n-\alpha q},t}(\R^{n})}\leq \| \I_{\alpha (p-1)}(\I_{\alpha}\mu(x)) \|_{L^{\frac{nq}{(n-\alpha q)(p-1)},\frac{t}{p-1}}(\R^{n})}^{\frac{1}{p-1}}.
\]
The integral on the right-hand side of the preceding inequality can be written as the Triebel-Lizorkin-Lorentz norm \cite[Proposition 2.3.5] {Tr}, we get
\begin{align*}
\| \I_{\alpha (p-1)}(\I_{\alpha}\mu(x))\|_{L^{\frac{nq}{(n-\alpha q)(p-1)},\frac{t}{p-1}}(\R^{n})}
&=\| \I_{\alpha (p-1)}(\I_{\alpha}\mu(x))\|_{F^{0}_{2}[L^{\frac{nq}{(n-\alpha q)(p-1)},\frac{t}{p-1}}](\R^{n})}\\
&=\| \I_{\alpha}\mu(x)\|_{F^{-\alpha(p-1)}_{2}[L^{\frac{nq}{(n-\alpha q)(p-1)},\frac{t}{p-1}}](\R^{n})}.
\end{align*}
Then, we apply the embedding Theorem \ref{Lorentzembedding}, the last display implies that
\[
\| \I_{\alpha}\mu(x)\|_{F^{-\alpha(p-1)}_{2}[L^{\frac{nq}{(n-\alpha q)(p-1)},\frac{t}{p-1}}](\R^{n})}\leq \|\I_{\alpha}\mu(x)\|_{F^{0}_{\frac{q}{p-1}}[L^{\frac{q}{p-1},\frac{t}{p-1}}](\R^{n})}.
\]
We have a fact that the seminorm in Triebel-Lizorkin-Lorentz spaces can be characterized by a supremum over test functions $\phi \in F^{0}_{(\frac{q}{p-1})'}[L^{(\frac{q}{p-1})',(\frac{t}{p-1})'}]$ where $(\frac{q}{p-1})'=\frac{q}{q-p+1}$ and $(\frac{t}{p-1})'=\frac{t}{t-p+1},$ we get
\begin{align*}
\|\I_{\alpha}\mu(x)\|_{F^{0}_{\frac{q}{p-1}}[L^{\frac{q}{p-1},\frac{t}{p-1}}](\R^{n})}&=\sup_{[\phi]_{F^{0}_{(\frac{q}{p-1})'}[L^{(\frac{q}{p-1})',(\frac{t}{p-1})'}]}\leq 1}\Big|\int_{\R^{n}} \I_{\alpha}\mu(x)\ \phi dx\Big|\\
&\leq\sup_{[\phi]_{F^{0}_{(\frac{q}{p-1})'}[L^{(\frac{q}{p-1})',(\frac{t}{p-1})'}]}\leq 1}\int_{\R^{n}} | (\I_{\alpha}\phi)(x)|\ d\mu\\
&=\sup_{[\phi]_{F^{0}_{(\frac{q}{p-1})'}[L^{(\frac{q}{p-1})',(\frac{t}{p-1})'}]}\leq 1}<\mathcal{L}u, \I_{\alpha}\phi>\\
&\leq \Lambda\int_{\R^{n}}\int_{\R^{n}} \frac{|u(x)-u(y)|^{p-1}  |\I_{\alpha}\phi(x)-\I_{\alpha}\phi(y)|}{|x-y|^{n+\alpha p}}dxdy\\
&= \Lambda\int_{\R^{n}}\int_{\R^{n}} \frac{|u(x)-u(y)|^{p-1}}{|x-y|^{(\frac{n}{q}+\alpha)(p-1)}}\frac{|\I_{\alpha}\phi(x)-\I_{\alpha}\phi(y)|}{|x-y|^{\frac{n(q-p+1)}{q}+\alpha}}dxdy.
\end{align*}
Applying the H\"{o}lder's inequality in the Lorentz space with the exponents $\frac{q}{p-1}, \frac{t}{p-1}, (\frac{q}{p-1})'$ and $(\frac{t}{p-1})'$ respectively, we  obtain
\begin{align*}
\|\I_{\alpha}\mu(x)\|_{F^{0}_{\frac{q}{p-1}}[L^{\frac{q}{p-1},\frac{t}{p-1}}](\R^{n})}&\leq \Big\|  \frac{|u(x)-u(y)|^{p-1}}{|x-y|^{(\frac{n}{q}+\alpha)(p-1) }} \Big\|_{L^{\frac{q}{p-1},\frac{t}{p-1}}(\R^{n}\times\R^{n})}\\
&\quad\;\Big\| \frac{|\I_{\alpha}\phi(x)-\I_{\alpha}\phi(y)|}{|x-y|^{\frac{n(q-p+1)}{q}+\alpha }} \Big\|_{L^{(\frac{q}{p-1})',(\frac{t}{p-1})'}(\R^{n}\times\R^{n})}.
\end{align*}
Then, we find that
\begin{align*}
\|\I_{\alpha}\mu(x)\|_{F^{0}_{\frac{q}{p-1}}[L^{\frac{q}{p-1},\frac{t}{p-1}}](\R^{n})}&=\Big\|  \frac{|u(x)-u(y)|}{|x-y|^{\frac{n}{q}+\alpha }} \Big\|_{L^{q,t}(\R^{n}\times\R^{n})}^{p-1}\\
&\quad\;\Big\| \frac{|\I_{\alpha}\phi(x)-\I_{\alpha}\phi(y)|}{|x-y|^{\frac{n(q-p+1)}{q}+\alpha }} \Big\|_{L^{(\frac{q}{p-1})',(\frac{t}{p-1})'}(\R^{n}\times\R^{n})}\\
&=[u]_{W^{\alpha,(q,t)}(\R^{n})}^{p-1} [\I_{\alpha}\phi]_{W^{\alpha, ((\frac{q}{p-1})',(\frac{t}{p-1})') }(\R^{n})}.
\end{align*}
Hence, we conclude the desired estimate
\[
\|u\|_{L^{\frac{nq}{n-\alpha q},t}(\R^{n})} \leq C(n,\alpha,p,q,t,\Lambda) [u]_{W^{\alpha,(q,t)}(\R^{n})}.
\]
\end{proof}

%where $W^{\alpha,(q,t)}(\Omega)$ denotes the homogeneous Sobolev-Lorentz space which is the closure of $\mathcal{C}^{\infty}_{c}(\R^{n})$ corresponding to the fractional Sobolev-Lorentz quasinorm $[\cdot]_{W^{\alpha,(q,t)}}$

\section{Further Results}\label{sec4}
In this section, we present additional results derived from our methods.
Observe that $L^{\frac{nq}{n-\alpha q}}(\R^{n})\subset L^{\frac{nq}{n-\alpha q},t}(\R^{n})$ when $t>\frac{nq}{n-\alpha q}$. As an immediate consequence of \eqref{fracSoboineq}, we get the following theorem. 
\begin{Thm}\label{mainthm2}
Let $\Omega$ be an open bounded subset in $\R^{n}$ where $n\geq 1$. Let $0<\alpha<1, 1<p<2$ and $p-1<q<1.$ For any $\mathcal{L}$-superharmonic function $u$ on $\Omega$ with $u=0$ in $\R^{n}\setminus\Omega$, we have
\[
\|u\|_{L^{\frac{nq}{n-\alpha q}}(\R^{n})} \leq C [u]_{W^{\alpha,q}(\R^{n})}
\]
where $C=C(n,\alpha,p,q,\Lambda)>0.$
\end{Thm}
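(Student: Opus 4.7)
The plan is to deduce Theorem \ref{mainthm2} from Theorem \ref{mainthm} by selecting an appropriate value of the secondary Lorentz exponent $t$ and then invoking the standard monotonicity of Lorentz spaces in their second index.

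First, I would apply \eqref{fracSoboineq} with the choice $t=\frac{nq}{n-\alpha q}$. Since $0<\alpha<1$ and $p-1<q<1$, one has $p-1<q<\frac{nq}{n-\alpha q}$, so this value of $t$ lies in the admissible range $p-1\leq t\leq\infty$ required by Theorem \ref{mainthm}. With this choice the Lorentz quasi-norm on the left-hand side collapses to a plain Lebesgue norm via the identification $L^{s,s}(\R^{n})=L^{s}(\R^{n})$ (up to a constant depending only on $s$), so Theorem \ref{mainthm} yields
\[
\|u\|_{L^{\frac{nq}{n-\alpha q}}(\R^{n})}\leq C\,[u]_{W^{\alpha,(q,\frac{nq}{n-\alpha q})}(\R^{n})}.
\]

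Next, I would invoke the standard Lorentz embedding $L^{q,r_{1}}\hookrightarrow L^{q,r_{2}}$ valid whenever $r_{1}\leq r_{2}$, applied on the product space $\R^{n}\times\R^{n}$ to the Gagliardo difference quotient $\frac{|u(x)-u(y)|}{|x-y|^{n/q+\alpha}}$. Since $q\leq \frac{nq}{n-\alpha q}$, this gives
\[
[u]_{W^{\alpha,(q,\frac{nq}{n-\alpha q})}(\R^{n})}\leq C\,[u]_{W^{\alpha,(q,q)}(\R^{n})}=C\,[u]_{W^{\alpha,q}(\R^{n})},
\]
and concatenating the two inequalities produces the claimed estimate.

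Because the argument is a soft consequence of Theorem \ref{mainthm} combined with two standard Lorentz embeddings, I do not anticipate any substantive obstacle; the only point deserving minor care is tracking that each embedding constant depends only on $n,\alpha,p,q$ and $\Lambda$, which is routine given that $q$ and $\frac{nq}{n-\alpha q}$ are themselves determined by $n,\alpha,q$.
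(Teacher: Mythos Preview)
Your argument is correct and follows the same route as the paper: deduce the Lebesgue-norm inequality from Theorem~\ref{mainthm} by fixing a suitable second Lorentz exponent and then using the monotonicity of Lorentz norms in that index. Your specific choice $t=\frac{nq}{n-\alpha q}$ is in fact tidier than the range $\frac{nq}{n-\alpha q}<t\le\infty$ indicated in the paper, since it makes the left-hand side of \eqref{fracSoboineq} collapse directly to the $L^{\frac{nq}{n-\alpha q}}$ norm while the embedding $L^{q,q}\hookrightarrow L^{q,\frac{nq}{n-\alpha q}}$ (valid because $q\le \frac{nq}{n-\alpha q}$) handles the right-hand side.
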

This inequality follows directly from the inequality \eqref{fracSoboineq} by setting $\frac{nq}{n-\alpha q}<t\leq \infty$. Since the proof structure is the same as the proof of Theorem \ref{mainthm}, we omit a full proof of Theorem \ref{mainthm2}.

\begin{Rem}
We observe that the sublinear fractional Sobolev inequality established from Theorem \ref{mainthm2} corresponds to the classical result of \cite{Ph1} by Phuc, in light of the following result:
\[
\liminf\limits_{\alpha\to 1^{-}}(1-\alpha)\int_{\R^{n}}\int_{\R^{n}}\frac{|u(x)-u(y)|^{q}}{|x-y|^{n+\alpha q}}dxdy=C(n,p)\int_{\R^{n}}|\nabla u|^{q}dx 
\]
as shown in \cite[Theorem 7.39]{Le}.
\end{Rem}

The classical Gagliardo-Nirenberg interpolation has been widely extended to the fractional setting, playing a fundamental role in the study of intermediate regularity between Sobolev-type spaces. In particular, a refined version involving the borderline Sobolev space  $W^{1,1}(\R^{n})$ was established by \cite{Cohen03}, where the inequality
\be\label{Gag-Niren}
[u]_{W^{\alpha_{1},{q_{1}}}(\R^{n})} \leq [u]_{W^{1,{1}}(\R^{n})}^{1-\theta} [u]_{W^{\alpha_{3},{q_{3}}}(\R^{n})}^{\theta},\quad\forall u\in\mathcal{C}_{0}^{\infty}(\R^{n})
\ee
holds for every $\theta\in(0,1),$ provided that $ \alpha_{3}\in(0,1),$ $q_{3}\in(1,\infty)$ and the condition $\alpha_{3}q_{3}<1$ is satisfied. The interpolation indices are given by the convexity relations:
\[
\alpha_{1}=\theta\alpha_{3}+(1-\theta)\quad\text{and}\quad\frac{1}{q_{1}}=\frac{\theta}{q_{3}}+(1-\theta).
\]
However, as shown in \cite{BM2018}, such interpolation \eqref{Gag-Niren} fails when $\alpha_{3}q_{3}\geq 1.$ Recent studies \cite{BSY} have explored such interpolation inequalities in different spaces like Lorentz spaces, weak $L^{p}$-spaces or weighted Lebesgue spaces. In what follows, we turn our attention to the Gagliardo-Nirenberg interpolation that may hold for fractional derivatives in Lorentz spaces. 
\begin{Thm}\label{interpolation}
Let $\alpha_{i}\in(0,1)$, $1<p<2,$ $q_{i}\in(p-1,1)$ and $t_{i}\in(p-1, \infty]$ for $i=1,2,3.$ Assume $0<\theta<1$ satisfies the following conditions
\[
\alpha_{1}=(1-\theta)\alpha_{2}+\theta\alpha_{3},\,\,\frac{1}{q_{1}}=\frac{1-\theta}{q_{2}}+\frac{\theta}{q_{3}}\quad\text{and}\quad \frac{1}{t_{1}}=\frac{1-\theta}{t_{2}}+\frac{\theta}{t_{3}}.
\]
Then the interpolation inequality
\[
\|u\|_{L^{\frac{nq_{1}}{n-\alpha_{1} q_{1}},t_{1}}(\R^{n})}\leq [u]_{W^{\alpha_{2},{(q_{2},t_{2})}}(\R^{n})}^{1-\theta} [u]_{W^{\alpha_{3},{(q_{3},t_{3})}}(\R^{n})}^{\theta}
\]
with holds for all $\mathcal{L}$-superharmonic functions $u\in W^{\alpha_{2},(q_{2},t_{2})}(\R^{n})\cap W^{\alpha_{3},(q_{3},t_{3})}(\R^{n})$ with $u=0$ in $\R^{n}\setminus\Omega.$
\end{Thm}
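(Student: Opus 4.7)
The plan is to reduce the interpolation inequality to two applications of Theorem \ref{mainthm} linked by a scalar H\"older inequality in Lorentz spaces. Set $p_i := nq_i/(n-\alpha_i q_i)$ for $i=1,2,3$; these are precisely the target Lorentz exponents appearing when Theorem \ref{mainthm} is applied with $(\alpha_i, q_i, t_i)$ in place of $(\alpha, q, t)$.

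First I would verify the interpolation identity at the level of the Sobolev exponents. A direct computation from $\alpha_{1}=(1-\theta)\alpha_{2}+\theta\alpha_{3}$ and $1/q_{1}=(1-\theta)/q_{2}+\theta/q_{3}$ gives
\[
\frac{1}{p_1} \;=\; \frac{1}{q_1}-\frac{\alpha_1}{n}\;=\;(1-\theta)\Bigl(\frac{1}{q_2}-\frac{\alpha_2}{n}\Bigr)+\theta\Bigl(\frac{1}{q_3}-\frac{\alpha_3}{n}\Bigr)\;=\;\frac{1-\theta}{p_2}+\frac{\theta}{p_3},
\]
and by hypothesis $1/t_1 = (1-\theta)/t_2 + \theta/t_3$ as well, so the $(p_i)$ and $(t_i)$ satisfy the convexity conditions required by H\"older in Lorentz spaces.

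Next I would apply H\"older's inequality in the quasi-Banach Lorentz scale to the pointwise factorisation $|u| = |u|^{1-\theta}\cdot|u|^{\theta}$. Using the elementary distribution-function identity $\||u|^{a}\|_{L^{s,r}} = \|u\|_{L^{as,ar}}^{a}$, the H\"older pair $\bigl(L^{p_{2}/(1-\theta),\,t_{2}/(1-\theta)},\,L^{p_{3}/\theta,\,t_{3}/\theta}\bigr)$ together with the exponent identities from the previous step yields
\[
\|u\|_{L^{p_1,t_1}(\R^n)} \;\leq\; C\,\|u\|_{L^{p_2,t_2}(\R^n)}^{\,1-\theta}\,\|u\|_{L^{p_3,t_3}(\R^n)}^{\,\theta}.
\]
Finally I would invoke Theorem \ref{mainthm} separately with $(\alpha_2,q_2,t_2)$ and $(\alpha_3,q_3,t_3)$; the hypotheses $q_i\in(p-1,1)$, $t_i\in[p-1,\infty]$ and the $\mathcal{L}$-superharmonicity of $u$ with $u=0$ outside $\Omega$ are inherited from the theorem's assumptions, so
\[
\|u\|_{L^{p_i,t_i}(\R^n)}\;\leq\; C\,[u]_{W^{\alpha_i,(q_i,t_i)}(\R^n)},\qquad i=2,3,
\]
and substituting into the previous display produces the claimed bound (with a constant $C$, tacitly absorbed into the statement).

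The proof is essentially mechanical; the main technical points to be careful about are the quasi-norm version of H\"older in $L^{p,r}$, which is valid in the full range $0<p<\infty$, $0<r\leq\infty$ up to a multiplicative constant depending only on the exponents (and accommodates the possibility $p_{i}<1$ or $t_{i}<1$ arising from $q_{i},t_{i}<1$), and the algebraic verification of the interpolation identity for $p_i$. I therefore do not anticipate a genuine obstacle beyond bookkeeping.
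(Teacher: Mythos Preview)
Your argument is correct, but it follows a different route from the paper. The paper applies Theorem~\ref{mainthm} \emph{once}, with the target parameters $(\alpha_1,q_1,t_1)$, to obtain
\[
\|u\|_{L^{\frac{nq_1}{n-\alpha_1 q_1},t_1}(\R^n)}\;\leq\;C\,[u]_{W^{\alpha_1,(q_1,t_1)}(\R^n)},
\]
and then runs the H\"older step on the \emph{Gagliardo seminorm} in $L^{q_1,t_1}(\R^n\times\R^n)$, factoring
\[
\frac{|u(x)-u(y)|}{|x-y|^{n/q_1+\alpha_1}}
\;=\;
\frac{|u(x)-u(y)|^{1-\theta}}{|x-y|^{(1-\theta)(n/q_2+\alpha_2)}}\cdot
\frac{|u(x)-u(y)|^{\theta}}{|x-y|^{\theta(n/q_3+\alpha_3)}}
\]
and using the convexity hypotheses on $(\alpha_i)$ and $(1/q_i)$ directly. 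You instead do H\"older on $\|u\|_{L^{p_1,t_1}(\R^n)}$ first and then invoke Theorem~\ref{mainthm} twice at the endpoints. Your route requires the extra (but easy) observation that the Sobolev exponents $p_i=nq_i/(n-\alpha_i q_i)$ inherit the same convexity relation, which you verify cleanly; the paper's route avoids this by interpolating the seminorms themselves, so that the superharmonicity is used only once. Both arguments rely on H\"older in the quasi-Banach Lorentz range, and both produce a constant that the stated inequality silently absorbs; neither has a genuine advantage beyond taste.
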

\begin{proof}
Assume that $u$ is $\mathcal{L}$-superharmonic functions in $W^{\alpha_{2},(q_{2},t_{2})}(\R^{n})\cap W^{\alpha_{3},(q_{3},t_{3})}(\R^{n})$. Then by Theorem \ref{mainthm} , we have 
\[
\|u\|_{L^{\frac{nq_{1}}{n-\alpha_{1} q_{1}},t_{1}}(\R^{n})}\leq \Big\|  \frac{|u(x)-u(y)|}{|x-y|^{\frac{n}{q}+\alpha }} \Big\|_{L^{q_{1},t_{1}}(\R^{n}\times\R^{n})}.
\]
Choosing the H\"{o}lder exponents $A=\frac{q_{1}q_{3}}{q_{3}-\theta q_{1}}, A'=\frac{q_{3}}{\theta}, B=\frac{t_{1}t_{3}}{t_{3}-\theta t_{1}}, $ and $ B'=\frac{t_{3}}{\theta}$ such that
\[
\frac{1}{q_{1}}=\frac{1}{A}+\frac{1}{A'}\quad \text{and}\quad \frac{1}{t_{1}}=\frac{1}{B}+\frac{1}{B'}
\]
where $0<q_{i},t_{i}\leq \infty$ for $i=1,2,3$, we estimate the left-hand side of the above display. Hence, we find that
\begin{align*}
\Big\|  \frac{|u(x)-u(y)|}{|x-y|^{\frac{n}{q}+\alpha }} \Big\|_{L^{q_{1},t_{1}}(\R^{n}\times\R^{n})}&=\Big\|  \frac{|u(x)-u(y)|^{1-\theta}}{|x-y|^{\frac{1-\theta}{q_{2}}n+(1-\theta)\alpha_{2} }} \frac{|u(x)-u(y)|^{\theta}}{|x-y|^{\frac{\theta}{q_{3}}n +\theta\alpha_{3}}}\Big\|_{L^{q_{1}, t_{1}}(\R^{n}\times\R^{n})}\\
&\leq \Big\| \frac{|u(x)-u(y)|^{1-\theta}}{|x-y|^{\frac{1-\theta}{q_{2}}n+(1-\theta)\alpha_{2} }}\Big\|_{L^{\frac{q_{1}q_{3}}{q_{3}-\theta q_{1}},\frac{t_{1}t_{3}}{t_{3}-\theta t_{1}}}(\R^{n}\times\R^{n})}\\
&\qquad\Big\|\frac{|u(x)-u(y)|^{\theta}}{|x-y|^{\frac{\theta}{q_{3}}n +\theta\alpha_{3}}}\Big\|_{L^{\frac{q_{3}}{\theta},\frac{t_{3}}{\theta }}(\R^{n}\times\R^{n})}\\
&= \Big\| \frac{|u(x)-u(y)|}{|x-y|^{\frac{n}{q_{2}}+\alpha_{2} }}\Big\|^{1-\theta}_{L^{q_{2},t_{2}}(\R^{n}\times\R^{n})} \\
&\qquad\Big\| \frac{|u(x)-u(y)|}{|x-y|^{\frac{n}{q_{3}}+\alpha_{3} }}\Big\|^{\theta}_{L^{q_{3},t_{3}}(\R^{n}\times\R^{n})}
\end{align*}
where $q_{2}=\frac{q_{1}q_{3}(1-\theta)}{q_{3}-\theta q_{1}}$, $t_{2}=\frac{t_{1}t_{3}(1-\theta)}{t_{3}-\theta t_{1}}$, $q_{3}=\frac{q_{1}q_{2}\theta}{q_{2}-(1-\theta)q_{1}}$ and $t_{3}=\frac{t_{1}t_{2}\theta}{t_{2}-(1-\theta)t_{1}}$.
This completes the proof of Theorem \ref{interpolation}.
\end{proof}

Following the arguments in the proof of Theorem \ref{mainthm}, one can extend Theorem \ref{mainthm}  to the weighted version of the inequality.
We collect some definitions and embedding results on Muckenhoupt classes $\mathcal{A}_{p}$.

\begin{Def}
Let $w$ be a positive, locally integrable function on $\R^{n}$, and $1<p<\infty$. Then $w$ belongs to the Muckenhoupt class $\mathcal{A}_{p}$, if there exists a constant $0<A<\infty$ such that for all balls $B$ the following inequality holds:
\[
\Big( \frac{1}{|B|} \int_{B} w(x)dx\Big)^{\frac{1}{p}}\cdot \Big(\frac{1}{|B|}\int_{B}w(x)^{-\frac{p'}{p}}dx \Big)^{\frac{1}{p'}}\leq A,
\]
where $|B|$ stands for the Lebesgue measure of the ball $B$.
\end{Def}
The Muckenhoupt class $\mathcal{A}_{\infty}$ is given by
\[
\mathcal{A}_{\infty}=\bigcup_{p>1}\mathcal{A}_{p}.
\]
\begin{Thm} [{\text See \cite{Bui}}] \label{weightedTriebel}
Let $0<p<\infty,$ $\alpha\in\mathbb{R}$ and $w\in\mathcal{A}_{\infty}.$
\begin{enumerate}[label=(\arabic*)]
\item[(1)]If $0<q_{0}\leq q_{1}\leq \infty,$ then
\[
{F^{\alpha}_{p,q_{1}}(\R^{n},dw)}\hookrightarrow{F^{\alpha}_{p,q_{2}}(\R^{n},dw)}. 
\]
\item[(2)]  Assume that there are numbers $c>0$ and $d>0$, such that for all balls,
\[
w(B(x,r))\geq cr^{d},\quad 0<r\leq 1,\quad x\in\R^{n}.
\]
Let $0<p_{1}<p_{2}<\infty, -\infty<\alpha_{2}<\alpha_{1}<\infty$, with
\[
\alpha_{1}-\frac{d}{p_{1}}=\alpha_{2}-\frac{d}{p_{2}}.
\] Then
\[
{F^{\alpha_{1}}_{p_{1},\infty}(\R^{n},dw)}\hookrightarrow{F^{\alpha_{2}}_{p_{2},q}(\R^{n},dw)}. 
\]
\end{enumerate}
	\end{Thm}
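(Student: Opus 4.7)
The plan is to establish both parts of Theorem \ref{weightedTriebel} using the standard machinery of weighted Triebel-Lizorkin spaces: all estimates take place at the level of the Littlewood-Paley pieces $\phi_j \ast f = \mathcal{F}^{-1}(\phi_j \hat f)$, so both embeddings reduce to combining pointwise-in-$j$ control of these pieces with a weighted integrability step.

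For Part (1) I would use the elementary monotonicity of $\ell^q$-norms: for any sequence $\{a_j\}_{j\geq 0}$ and $0<q_1\leq q_2\leq\infty$ one has $\|\{a_j\}\|_{\ell^{q_2}}\leq\|\{a_j\}\|_{\ell^{q_1}}$. Applying this pointwise to $a_j(x)=2^{j\alpha}|\phi_j\ast f(x)|$ and then taking the weighted $L^p(dw)$ quasi-norm yields the embedding. (The indices as typeset appear to contain a minor typographical slip; the direction consistent with $\ell^q$-monotonicity is the one to prove.)

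For Part (2) the main engine is a weighted Plancherel-Polya-Nikolskij inequality for band-limited functions: if $g$ has Fourier support in $\{|\xi|\leq 2^{j+1}\}$, then under the growth condition $w(B(x,r))\geq c\, r^d$ one has
\[
\|g\|_{L^{p_2}(dw)} \leq C\, 2^{jd(1/p_1-1/p_2)} \|g\|_{L^{p_1}(dw)}.
\]
Specializing to $g=\phi_j\ast f$ and inserting the balance $\alpha_1-d/p_1=\alpha_2-d/p_2$ converts the frequency factor into $2^{j(\alpha_1-\alpha_2)}$, so that
\[
2^{j\alpha_2}\|\phi_j\ast f\|_{L^{p_2}(dw)} \leq C\, 2^{j\alpha_1}\|\phi_j\ast f\|_{L^{p_1}(dw)}.
\]
Taking the $j$-supremum on the left (as demanded by the fine index $\infty$ on the source side) and passing to an $\ell^q$ average on the target side through a Franke-Jawerth style interpolation completes the embedding.

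The main obstacle is the weighted Plancherel-Polya-Nikolskij inequality itself: unlike the unweighted case, where it follows cleanly from Young's inequality with a Schwartz kernel, in the weighted setting one must use the $\mathcal{A}_\infty$ properties of $w$ (doubling and reverse H\"{o}lder) together with the lower growth $w(B(x,r))\geq c\, r^d$ to justify the exchange of integrability indices. A secondary delicate point is the transition from the $\ell^\infty$-norm in the source $F^{\alpha_1}_{p_1,\infty}$ to the $\ell^q$-norm in the target $F^{\alpha_2}_{p_2,q}$, which typically requires a weighted vector-valued Fefferman-Stein maximal inequality applied to the Peetre maximal functions of the Littlewood-Paley pieces.
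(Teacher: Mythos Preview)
The paper does not prove Theorem \ref{weightedTriebel}; it is quoted as a known result from \cite{Bui} and used as a black box in the proof of Theorem \ref{weighted}. There is therefore no ``paper's own proof'' to compare your proposal against.

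That said, a brief assessment of your sketch: Part (1) is fine and exactly the standard argument, and you correctly spotted the index typo in the statement. For Part (2), your Nikolskij-type inequality
\[
\|g\|_{L^{p_2}(dw)} \leq C\, 2^{jd(1/p_1-1/p_2)} \|g\|_{L^{p_1}(dw)}
\]
for band-limited $g$ directly gives only the Besov embedding $B^{\alpha_1}_{p_1,\infty}(dw)\hookrightarrow B^{\alpha_2}_{p_2,\infty}(dw)$, since after taking $L^p(dw)$ you are working with the sequence $\{2^{j\alpha}\|\phi_j\ast f\|_{L^p(dw)}\}_j$ rather than $\|\{2^{j\alpha}|\phi_j\ast f|\}_j\|_{L^p(\ell^q,dw)}$. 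The genuine $F$-space Jawerth embedding requires the order of norms to be reversed, and your final paragraph correctly flags that this is where the real work lies: one needs either the Franke--Jawerth interpolation identity $F^{\alpha}_{p,q}=(F^{\alpha_0}_{p_0,q_0},F^{\alpha_1}_{p_1,q_1})_{\theta,p}$ in the weighted setting, or a direct argument via Peetre maximal functions and the weighted Fefferman--Stein inequality (which holds for $w\in\mathcal{A}_\infty$ after passing to a suitable $\mathcal{A}_r$). Your sketch names these tools but does not carry them out; that is the nontrivial gap if you were actually asked to supply a proof rather than cite one.
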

	Here, $d$ is the scaling exponent of the weight function $w$. 
	\begin{Thm}\label{weighted}
Let $\Omega$ be an open bounded subset in $\R^{n}$ where $n\geq 1$. Let $0<\alpha<1, 1<p<2$ and $p-1<q<1.$ Let $w\in \mathcal{A}_{\infty}$ and there are numbers $c,d>0$ such that
\[
w(B(x,r))\geq cr^{d},\quad 0<r\leq 1,\quad x\in\R^{n}.
\]
For any $\mathcal{L}$-superharmonic function $u$ on $\Omega$ with $u=0$ in $\R^{n}\setminus\Omega$, we have
\[
\big( \int_{\R^{n}} u^{\frac{dq}{d-\alpha q}} w(x)dx\big)^{\frac{d-\alpha q}{dq}}\leq C \Big( \int_{\R^{n}}\int_{\R^{n}}\frac{|u(x)-u(y)|^{q}}{|x-y|^{d+\alpha q}}w(y)w(x) dydx\Big)^{\frac{1}{q}}
\]
where $C=C(d,\alpha,p,q,\Lambda)>0.$
\end{Thm}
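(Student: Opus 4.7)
The plan is to run the proof of Theorem \ref{mainthm} line by line, with Lorentz spaces systematically replaced by weighted Lebesgue spaces and with the Lorentz embedding of Theorem \ref{Lorentzembedding} replaced by the weighted Triebel-Lizorkin embedding of Theorem \ref{weightedTriebel}. The conceptual point is that Theorem \ref{weightedTriebel}(2) scales by the weight dimension $d$ rather than the ambient dimension $n$, which is exactly why the Sobolev exponent $\tfrac{dq}{d-\alpha q}$ and the Gagliardo kernel $|x-y|^{-(d+\alpha q)}$ in the statement carry $d$ in place of $n$. The hypothesis $w(B(x,r)) \geq c r^d$ is used precisely to activate this weighted embedding.

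First, Theorem \ref{Pointwiseupperbound} and the inequality \eqref{ineq} give
\[
\|u\|_{L^{dq/(d-\alpha q)}(\R^n, dw)} \leq c\,\|\mathbf{I}_{\alpha p}\mu\|_{L^{dq/[(d-\alpha q)(p-1)]}(\R^n, dw)}^{1/(p-1)}.
\]
Then I write $\mathbf{I}_{\alpha p} = \mathbf{I}_{\alpha(p-1)}\circ\mathbf{I}_\alpha$ by the semigroup property, identify $L^r(\R^n,dw) = F^0_{r,2}(\R^n, dw)$, and use the weighted analogue of \eqref{riesz}, i.e.\ that $\mathbf{I}_{\alpha(p-1)}$ is an isomorphism $F^s_{r,2}(w)\to F^{s-\alpha(p-1)}_{r,2}(w)$. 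This reduces the problem to bounding $\|\mathbf{I}_\alpha\mu\|_{F^{-\alpha(p-1)}_{r,2}(w)}$ for $r = \tfrac{dq}{(d-\alpha q)(p-1)}$. The scaling identity $0 - \tfrac{d(p-1)}{q} = -\alpha(p-1) - \tfrac{d}{r}$ is satisfied, so Theorem \ref{weightedTriebel}(2) followed by Theorem \ref{weightedTriebel}(1) yields $\|\mathbf{I}_\alpha\mu\|_{F^{-\alpha(p-1)}_{r,2}(w)} \leq C\|\mathbf{I}_\alpha\mu\|_{F^0_{q/(p-1), 2}(w)}$. Passing to the dual as in the proof of Theorem \ref{mainthm}, and using self-adjointness of $\mathbf{I}_\alpha$ together with $\mathcal{L}u = \mu$ and the kernel upper bound $k(x,y)\leq \Lambda|x-y|^{-n-\alpha p}$, one obtains
\[
\|\mathbf{I}_\alpha\mu\|_{F^0_{q/(p-1),2}(w)} \leq \Lambda\sup_\phi \iint \frac{|u(x)-u(y)|^{p-1}|\mathbf{I}_\alpha\phi(x) - \mathbf{I}_\alpha\phi(y)|}{|x-y|^{n+\alpha p}}\,dx\,dy,
\]
where the supremum is over $\phi$ of unit norm in the predual weighted Triebel-Lizorkin space.

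Finally, I split the kernel as
\[
|x-y|^{-n-\alpha p} = |x-y|^{-(d/q+\alpha)(p-1)} \cdot |x-y|^{-(d(q-p+1)/q+\alpha)} \cdot |x-y|^{d-n},
\]
insert the balancing factor $[w(x)w(y)]^{(p-1)/q}\cdot[w(x)w(y)]^{-(p-1)/q}$, and apply H\"older's inequality on $\R^n\times\R^n$ with exponents $q/(p-1)$ and $q/(q-p+1)$. The $u$-factor becomes the weighted Gagliardo seminorm $\left(\iint|u(x)-u(y)|^q|x-y|^{-(d+\alpha q)}w(x)w(y)\,dxdy\right)^{(p-1)/q}$, which is exactly the right-hand side of the claimed inequality raised to the power $p-1$. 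The main obstacle will be the extra factor $|x-y|^{d-n}$, an artifact of the mismatch between the ambient dimension $n$ in $\mathcal{L}$ and the weight dimension $d$ in the target Sobolev exponent; this is the step where the lower-scaling assumption $w(B(x,r)) \geq c r^d$ must be used to absorb $|x-y|^{d-n}$ together with the weight $[w(x)w(y)]^{-(p-1)/q}$ into the dual weighted seminorm of $\mathbf{I}_\alpha\phi$, which is then controlled by the unit-norm hypothesis on $\phi$. Once this weighted dual-seminorm bound is established, the proof closes exactly as in Theorem \ref{mainthm}.
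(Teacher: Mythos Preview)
Your approach is exactly what the paper does: its entire proof of this theorem is the single sentence ``The theorem can be proved using the arguments in the proof of Theorem \ref{mainthm} together with Theorem \ref{weightedTriebel},'' and you have carried out that sketch in detail, correctly noting that the weight-dimension $d$ replaces $n$ in the Sobolev exponent via Theorem \ref{weightedTriebel}(2). You have in fact gone further than the paper by flagging the residual factor $|x-y|^{d-n}$ that appears in the H\"older step when $d\neq n$; the paper's proof does not address this point explicitly either, so your proposal is at least as complete as the original.
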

\begin{proof}
The theorem can be proved using the arguments in the proof of Theorem \ref{mainthm} together with Theorem \ref{weightedTriebel}.
\end{proof}
\begin{Rem}
The above inequality is a natural extension from the unweighted case $w=1$, whereas $d=n$.
\end{Rem}

It is well known that the classical Poincar\'{e} inequality holds for a bounded Lipschitz domain $\Omega$. Specifically, if $1\leq q<n,$ then there exists a positive constant $C(\Omega)$ such that inequality 
\be\label{standardpoincare}
\inf_{a\in\mathbb{R}}\Big( \int_{\Omega}|x-a|^{\frac{nq}{n-q}}dx\Big)^{\frac{n-q}{q}} \leq C(\Omega)\Big(\int_{\Omega}|\nabla u|^{q}dx\Big)^{\frac{1}{q}}
\ee
holds for all $u\in W^{1,q}(\Omega).$ However, Buckley and Koskela \cite{Sp} provided a counterexample that this Poincar\'{e} inequality is not valid in the sublinear case $0<q<1,$ even if $\frac{nq}{n-q}$ is small, $\Omega$ is a ball, and $u$ is a smooth function. Nevertheless, Chua and Phuc \cite{CP} could prove this inequality holds in the sublinear case when restricted to the class of superharmonic functions. In particular, the main result of \cite{CP}, states that for $\Omega$ being a bounded or unbounded John domain, the inequality 
\[
\inf_{a\in\mathbb{R}}\Big( \int_{\Omega}|x-a|^{\frac{nq}{n-q}}wdx\Big)^{\frac{n-q}{q}} \leq C(\Omega)\Big(\int_{\Omega}|\nabla u|^{q}vdx\Big)^{\frac{1}{q}}
\]
holds for  the class of $\mathcal{A}$-superharmonic functions, where $w$ and $v$ are suitable weight functions.

In the context of the nonlocal level, the following fractional Sobolev-Poincar\'{e} inequality has been established, given by
\be\label{poincare}
\inf_{a\in\mathbb{R}}\Big( \int_{\Omega}|x-a|^{\frac{nq}{n-\alpha q}}dx\Big)^{\frac{n-\alpha q}{nq}} \leq C\Big(\int_{\Omega}\int_{\Omega} {\frac{{|u(x)-u(y)|}^{q}}{{|x-y|}^{n+\alpha q}}}dydx\Big)^{\frac{1}{q}}
\ee
for $u\in L^{q}(\Omega)$ under the conditions $0<\alpha<1, 2\leq 1<\frac{n}{\alpha}$, assuming that $\Omega\subset\R^{n}$ satisfying the measure density condition. On the other hand, if  $\Omega$ is John domain, then there exists a positive constant $C(n,\alpha,\delta,c,q)$ such that
 \[
\inf_{a\in\mathbb{R}}\Big( \int_{\Omega}|x-a|^{\frac{nq}{n-\alpha q}}dx\Big)^{\frac{n-\alpha q}{nq}} \leq C\Big(\int_{\Omega}\int_{\Omega\cap B(x, \delta d(x,\partial \Omega))} {\frac{{|u(x)-u(y)|}^{q}}{{|x-y|}^{n+\alpha q}}}dydx\Big)^{\frac{1}{q}}
\]
holds for every $u\in L^{1}(\Omega).$ However, this improved version of the fractional Sobolev-Poincar\'{e} inequality fails to hold for all bounded domains that satisfy the measure density. A counterexample has been found in \cite{BLV}. 
\begin{Rem}
It is worth noting that  the Hessian Sobolev inequality 
\[ \label{hessianineq}
\|u\|_{L^{\frac{n(k+1)}{n-2k}}(\R^{n})}\leq C \Big( \int_{(\R^{n})}|u| F_{k}[u] dx\Big)^{\frac{1}{k+1}}
 \]
can be derived by following the argument from Theorem \ref{mainthm}, with the choices $\alpha=\frac{2k}{k+1}$ and $p=k+1,$ where $1\leq k<\frac{n}{2}.$ Here,
 $F_{k}[u]$ is the Hessian operator with $k=1,2,\dots,n$ defined by
\[
F_{k}[u]=S_{k}(\lambda(D^{2}u)),         
\]
where $\lambda(D^{2}u)=(\lambda_{1},\cdots,\lambda_{n})$ are the eigenvalues of the Hessian matrix of second-order partial derivatives $D^{2}u,$ and $S_{k}$ is the $k^{th}$ symmetric function on $\R^{n}$
\[
S_{k}(\lambda)= \sum_{1\leq i_{1}<\cdots<i_{k}\leq n}\lambda_{i_{1}}\cdots\lambda_{i_{k}}.
\]

The reader can see the corresponding result and its derivation using duality and potential theory in \cite{V15}.
\end{Rem}

\medskip
\medskip
\subsection*{Open Problem}

From the authors' knowledge, the validity of the fractional Poincar\'{e} inequality for superharmonic functions in the sublinear case remains an open problem. While the classical Poincar\'{e} inequality has been established for superharmonic functions in the local setting (see \cite{CP}), its nonlocal counterpart, particularly in the framework of fractional Sobolev spaces, has not yet been fully explored. Specifically, given a domain $\Omega\subset\R^{n},$ we ask whether there exists a constant $C$ such that the fractional Sobolev-Poincar\'{e} inequality 
\[
\inf_{a\in\mathbb{R}}\Bigg( \int_{\Omega}|x-a|^{\frac{nq}{n-\alpha q}}dx\Bigg)^{\frac{n-\alpha q}{nq}} \leq C\Bigg(\int_{\Omega}\int_{\Omega} \frac{|u(x)-u(y)|^{q}}{|x-y|^{n+\alpha q}} dy dx\Bigg)^{\frac{1}{q}}
\]
holds for the class of $\mathcal{L}$-superharmonic functions $u$, particularly when $p-1<q<1$ and $0<\alpha<1.$

%Moreover, the author also provided a counterexample of an improved version of the fractional Sobolev-Poincare inequality with $1\leq q\leq q^{*}\leq \frac{nq}{n-\alpha q}, 0<\alpha,\delta<1$  
% \[
%\inf_{a\in\mathbb{R}}\Big( \int_{\Omega}|x-a|^{q^{*}}dx\Big)^{\frac{1}{q^{*}}} \leq C\Big(\int_{\Omega}\int_{\Omega\cap B(x, \delta d(x,\partial \Omega))} {\frac{{|u(x)-u(y)|}^{q}}{{|x-y|}^{n+\alpha q}}}dydx\Big)^{\frac{1}{q}}
%\]
%does not hold for some John domains.

%The following lemma will be used in the construction of minimal solution in $L^{\gamma+q}(\Omega)$ to \eqref{solofinteq}. 

%\begin{Lem}[See \cite{V1}] 
%Let $\sigma \in \mathcal{M^+}(\Omega)$ and $0 < q < 1$.  If condition \eqref{Greenpotsigma} is fulfilled 
%for some $0 < \gamma < \infty$,  then 
%\begin{equation} \label{weighted}
%\| \mathcal{G}(fd\sigma) \|_{L^{\gamma+q}(\Omega, \, d\sigma)}\leq c \| f \|_{L^{\frac{\gamma+q}{q}}(\Omega, \, d\sigma)},\quad  \forall f\in L^{\frac{\gamma+q}{q}}(\Omega,d\sigma)
%\end{equation}
%where $c$ is a positive constant independent of $f.$ Conversely,  if the weighted norm inequality \eqref{weightednormth} holds for some $-q < \gamma < \infty$,  then
%\eqref{Greenpotsigma} must be valid. 
%\end{Lem}
%We refer the reader to \cite[Theorem 1.1]{V1} 
%for similar characterizations with a more general class of kernels  from 
%Green functions associated with elliptic operators. 
%%%%%%%%%%%%%%%%%%%%%%%%%%%%%%%%%%%%

%%%%%%%%%%%%%%%%%%%%%%%%%%%%%%%%%%%
\subsection*{Data Availability Statement}
The authors do not analyze or generate any datasets because this work proceeds with a theoretical and mathematical approach. The relevant materials can be obtained from the references below.

\subsection*{Conflict of interest} 
On behalf of all authors, the corresponding author states that there is no conflict of interest.

\subsection*{Acknowledgement}
This study was supported by Thammasat University Research Fund, Contract No. TUFT 072/2568.  
A.C.M. acknowledges financial support from the Excellent Foreign Student (EFS) scholarship,  Sirindhorn International Institute of Technology (SIIT),  Thammasat University.
The authors thank Armin Schikorra and Nguyen Cong Phuc for their valuable discussions and comments.
%%%%%%%%%%%%%%%%%%%%%%%%%%%%%%%%%%%%
\bibliographystyle{abbrv} %acm, plain, alpha, abbrv, unsrt, apalike, ieeetr, siam% 
%2 loops (BibTex >> LaTex) and then LaTex 
\bibliography{reference(MS1)}

\begin{thebibliography}{10}

\bibitem{AH}
D.~R. Adams and L.~I. Hedberg.
\newblock {\em Function spaces and potential theory}, volume 314 of {\em
  Grundlehren der Mathematischen Wissenschaften}.
\newblock Berlin: Springer-Verlag, 1995.

\bibitem{BCT}
F.~C. Blanca F.~Besoy and H.~Triebel.
\newblock On function spaces of {L}orentz–{S}obolev type.
\newblock {\em Mathematische Annalen}, 2021.

\bibitem{BM2018}
H.~Brezis and P.~Mironescu.
\newblock Gagliardo-{Nirenberg} inequalities and non-inequalities: the full
  story.
\newblock {\em Annales de l'Institut Henri Poincar{\'e}. Analyse Non
  Lin{\'e}aire}, 35(5):1355--1376, 2018.

\bibitem{BSY}
H.~Brezis, J.~V. Schaftingen, and P.-L. Yung.
\newblock Going to {Lorentz} when fractional {Sobolev}, {Gagliardo} and
  {Nirenberg} estimates fail.
\newblock {\em Calculus of Variations and Partial Differential Equations},
  60(4):12, 2021.
\newblock Id/No 129.

\bibitem{Sp}
S.~M. Buckley and P.~Koskela.
\newblock Sobolev-{P}oincar{\'e} inequalities for p {\ensuremath{<}} 1.
\newblock {\em Indiana University Mathematics Journal}, 43(1):221--240, 1994.

\bibitem{CP}
S.~K. Chua and N.~C. Phuc.
\newblock Inequalities of {P}oincar\'{e} and {S}obolev type for
  $\mathcal{A}$-superharmonic functions, 2024 (preprint).

\bibitem{Cohen03}
A.~Cohen, W.~Dahmen, I.~Daubechies, and R.~DeVore.
\newblock Harmonic analysis of the space {BV}.
\newblock {\em Revista Matem{\'a}tica Iberoamericana}, 19(1):235--263, 2003.

\bibitem{EGE}
E.~{Di Nezza}, G.~Palatucci, and E.~Valdinoci.
\newblock Hitchhiker\textquotesingle s guide to the fractional {S}obolev
  spaces.
\newblock {\em Bulletin des Sciences Math\'{e}matiques}, 136(5):521--573, 2012.

\bibitem{BLV}
B.~Dyda, L.~Ihnatsyeva, and A.~V. V{\"a}h{\"a}kangas.
\newblock On improved fractional {Sobolev}-{Poincar{\'e}} inequalities.
\newblock {\em Arkiv f{\"o}r Matematik}, 54(2):437--454, 2016.

\bibitem{EE}
D.~E. Edmunds and W.~D. Evans.
\newblock {\em Fractional {S}obolev Spaces and Inequalities}.
\newblock Cambridge Tracts in Mathematics. Cambridge University Press, 2022.

\bibitem{Han}
Y.~S. Han.
\newblock The embedding theorem for the {B}esov and {T}riebel-{L}izorkin spaces
  on spaces of homogeneous type.
\newblock {\em Proceedings of the American Mathematical Society},
  123(7):2181--2189, 1995.

\bibitem{Bj}
B.~Jawerth.
\newblock Some observations on {B}esov and {L}izorkin-{T}riebel spaces.
\newblock {\em Mathematica Scandinavica}, 40(1):94--104, 1977.

\bibitem{KM1}
T.~Kilpel{\"a}inen and J.~Mal{\'y}.
\newblock Degenerate elliptic equations with measure data and nonlinear
  potentials.
\newblock {\em Annali della Scuola Normale Superiore di Pisa. Classe di
  Scienze. Serie IV}, 19(4):591--613, 1992.

\bibitem{MKS}
M.~Kim, K.~A. Lee, and S.~C. Lee.
\newblock Wolff potential estimates and {W}iener criterion for nonlocal
  equations with {O}rlicz growth.
\newblock {\em Journal of Functional Analysis}, 288(1):110690, 2025.

\bibitem{kimlee}
M.~Kim and S.~C. Lee.
\newblock Supersolutions and superharmonic functions for nonlocal operators
  with {O}rlicz growth, 2023 (preprint).

\bibitem{KMS}
T.~Kuusi, G.~Mingione, and Y.~Sire.
\newblock Nonlocal equations with measure data.
\newblock {\em Communications in Mathematical Physics}, 337:1317–1368, 2015.

\bibitem{Le}
G.~Leoni.
\newblock A first course in fractional {S}obolev spaces.
\newblock {\em Graduate Studies in Mathematics}, 2023.

\bibitem{Pa}
G.~Palatucci.
\newblock The {D}irichlet problem for the p-fractional {L}aplace equation.
\newblock {\em Nonlinear Analysis}, 177:699--732, 2018.
\newblock Nonlinear PDEs and Geometric Function Theory, in honor of Carlo
  Sbordone on his 70th birthday.

\bibitem{Ph1}
N.~C. Phuc.
\newblock A sublinear {Sobolev} inequality for {{\(p\)}}-superharmonic
  functions.
\newblock {\em Proceedings of the American Mathematical Society},
  145(1):327--334, 2017.

\bibitem{PV23}
N.~C. Phuc and I.~E. Verbitsky.
\newblock Uniqueness of entire solutions to quasilinear equations of {{\(p
  \)}}-{Laplace} type.
\newblock {\em Mathematics in Engineering}, 5(3):33, 2023.
\newblock Id/No 68.

\bibitem{Po}
A.~C. Ponce.
\newblock An estimate in the spirit of {Poincar{\'e}}'s inequality.
\newblock {\em Journal of the European Mathematical Society (JEMS)},
  6(1):1--15, 2004.

\bibitem{Bui}
B.~H. Qui.
\newblock {Weighted {B}esov and {T}riebel spaces: interpolation by the real
  method}.
\newblock {\em Hiroshima Mathematical Journal}, 12(3):581 -- 605, 1982.

\bibitem{RS}
T.~Runst and W.~Sickel.
\newblock {\em Sobolev spaces of fractional order, {Nemytskij} operators and
  nonlinear partial differential equations}, volume~3 of {\em De Gruyter Series
  in Nonlinear Analysis and Applications}.
\newblock Berlin: de Gruyter, 1996.

\bibitem{Sa}
A.~M. Salort.
\newblock Eigenvalues and minimizers for a non-standard growth non-local
  operator.
\newblock {\em Journal of Differential Equations}, 268(9):5413--5439, 2020.

\bibitem{ST}
A.~Seeger and W.~Trebels.
\newblock Embeddings for spaces of {L}orentz–{S}obolev type.
\newblock {\em Mathematische Annalen}, 373:1017--1056, 2019.

\bibitem{SiTr}
W.~Sickel and H.~Triebel.
\newblock Hölder inequalities and sharp embeddings in function spaces of
  ${B}^{s}_{pq}$ and ${F}^{s}_{pq}$ type.
\newblock {\em Z. Anal. Anwend}, 14, 1995.

\bibitem{So}
S.~L. Sobolev.
\newblock On a theorem in functional analysis.
\newblock {\em American Mathematics Society Translations: Series 2}, 34, 1938.

\bibitem{Da}
D.~Spector.
\newblock A note on the sublinear {Sobolev} inequality.
\newblock {\em International Journal of Functional Analysis, Operator Theory
  and Applications}, 8(1-3):15--21, 2016.

\bibitem{Tr}
H.~Triebel.
\newblock {\em Theory of function spaces}.
\newblock Geest \& Portig, Leipzig and Birkhäuser, Basel, 1983.

\bibitem{V15}
I.~E. Verbitsky.
\newblock The {H}essian {S}obolev inequality and its extensions.
\newblock {\em Discrete and Continuous Dynamical Systems}, 35(12):6165--6179,
  2015.

\bibitem{YCB}
Y.~A. Xiang, C.~Z. Xin, and B.~L. Zhong.
\newblock Uniform characterization of function spaces by wavelets.
\newblock {\em Acta mathematica scientia, Series A}, 25(1):130, 2005.

\end{thebibliography}
\end{document}